\numberwithin{equation}{section}
\newtheorem{theorem}{Theorem}[section]
\newtheorem{lemma}{Lemma}[section]
\newtheorem{cor}{Corollary}[section]
\theoremstyle{definition}
\newcommand{\C}{\mathbb{C}}
\begin{document}

\title{ Normality and uniqueness property of meromorphic function in terms of  some differential polynomials }

\author{Nguyen Viet Phuong}
\address{Thai Nguyen University of Economics and Business Administration, Vietnam} \email{nvphuongt@gmail.com and nguyenvietphuong@tueba.edu.vn}

\thanks{Key words: Meromorphic functions, Nevanlinna theory, Uniqueness, Sharing value, Normal families, Differential polynomial}

\thanks{2010 Mathematics Subject Classification 30D35}

\begin{abstract}In this paper, we  will consider normality and uniqueness property of a family $\mathcal{F}$ of meromorphic functions when 
$[Q(f)]^{(k)}$ and $[Q(g)]^{(k)}$ share $\alpha$ ignoring multiplicities, for any $f,g\in \mathcal{F}$, where $Q$ is a polynomial and $\alpha $ is a small function. Our results do not need all of zeros of $Q$ have large order as  other authors's results. 
\end{abstract}

\thanks{This research is funded by Vietnam National Foundation for Science and Technology Development (NAFOSTED) under grant number  101.04-2017.320}

\baselineskip=16truept 
\maketitle 
\pagestyle{myheadings}
\markboth{}{}

\section{ Introduction and main results }
A family $\mathcal{F}$ of meromorphic functions defined in a plane domain $D$ is said to be {\it normal } on $D$, in the sense of Montel, if each sequence $\{f_n\}\subset\mathcal{F}$ contains a subsequence which converges spherically locally uniformly in $D.$ In 2004, Fang and Zalcman \cite{FZ} obtained an interesting result about the normality of a family of meromorphic functions and sharing values. In their paper, they consider  a family $\mathcal{F}$ of meromorphic functions such that their  zero's orders are at least $k +2$; and for each pair of functions $f$ and $g$ in $\mathcal{F}$ share $0$; and  $f^{(k)}$ and $g^{(k)}$ share a nonzero value $b$ in $D,$ then $\mathcal{F}$ is normal in $D.$  In 2008, Zhang \cite{QZ} improved the above result for the case of the first derivative by removing the condition that f and g share the value $0,$ and he considers for the case when $n\ge 3$ and $(f^{n})'$ and $(g^{n})'$ share a nonzero value $b$ in $D.$  Then, in 2009, Li and Gu \cite{LG} extended the results for higher derivative polynomials by considering for any positive integer $k$ and for $n\geq k+2$, if $(f^n)^{(k)}$ and $(g^n)^{(k)}$share a nonzero $b$ in $D$ for every pair of functions $f,g\in \mathcal{F},$ then $\mathcal{F}$ is normal in $D.$

 In this paper, we generalize the above results for differential polynomials as follows. Our result do not need the condition such that meromorphic functions have high zeros's orders.

\begin{theorem}\label{thm5} Let $\mathcal{F}$ be a family of meromorphic functions defined in a domain $D.$ Let $k, q\geq m\geq k+2$ be positive integers and let $Q(z)=z^q+a_{q-1}z^{q-1}+\dots+a_m z^m+a_0,$ where $a_0,a_m,\dots,a_{q-1}$ are complex constants and $a_m\ne 0.$ If for each pair of functions $f$ and $g$ in $\mathcal{F},$ $[Q(f)]^{(k)}$ and $[Q(g)]^{(k)}$ share a nonzero value $b$ in $D$ ignoring multiplicity, then $\mathcal{F}$ is normal in $D.$
\end{theorem}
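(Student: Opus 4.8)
The plan is to argue by contradiction using the Pang--Zalcman rescaling lemma, thereby reducing the normality of $\mathcal{F}$ to a value-distribution statement about a single limit function on $\mathbb{C}$. Since normality is a local property, it suffices to prove that $\mathcal{F}$ is normal at each $z_0\in D$, so suppose instead that $\mathcal{F}$ fails to be normal at some $z_0$. The decisive modelling step is the choice of rescaling exponent. Setting $\alpha=k/m$, I note $0<\alpha<1$ (because $m\ge k+2>k$), so $\alpha$ lies in the admissible range of Pang's lemma for a family on which no multiplicities are prescribed. Hence there are $f_n\in\mathcal{F}$, $z_n\to z_0$ and $\rho_n\to 0^{+}$ with
\[
g_n(\zeta):=\rho_n^{-k/m}f_n(z_n+\rho_n\zeta)\longrightarrow g(\zeta)
\]
locally uniformly in the spherical metric, $g$ a nonconstant meromorphic function on $\mathbb{C}$. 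The exponent $k/m$ is chosen so that, after substituting $z=z_n+\rho_n\zeta$ and using $\tfrac{d}{dz}=\rho_n^{-1}\tfrac{d}{d\zeta}$, the degree-$j$ term of $Q$ contributes $\rho_n^{\,jk/m-k}(g_n^{\,j})^{(k)}$ to $[Q(f_n)]^{(k)}$: the exponent vanishes for $j=m$, is strictly positive for $m<j\le q$, and the constant term differentiates away. Consequently
\[
\Phi_n(\zeta):=[Q(f_n)]^{(k)}(z_n+\rho_n\zeta)\longrightarrow a_m\,(g^m)^{(k)}(\zeta)=:\Phi(\zeta)
\]
locally uniformly off the poles of $g$, with $a_m\ne 0$.

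Next I transfer the sharing hypothesis to $\Phi$. Fix any $h\in\mathcal{F}$; by hypothesis each $[Q(f_n)]^{(k)}$ and $[Q(h)]^{(k)}$ share $b$ ignoring multiplicity. First, $\Phi\not\equiv b$: if $g$ had a pole then so would $\Phi$, while if $g$ were entire then $(g^m)^{(k)}\equiv b/a_m$ would force $g^m$ to be a polynomial of degree exactly $k$, impossible since $\deg(g^m)$ is a multiple of $m>k$. Thus $\Phi-b\not\equiv 0$. Suppose $\Phi-b$ had two distinct zeros $\zeta_1\ne\zeta_2$ (necessarily at points where $g$ is holomorphic, since $\Phi$ is finite there). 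By Hurwitz's theorem applied to $\Phi_n-b\to\Phi-b$ there are $\zeta_{i,n}\to\zeta_i$ with $[Q(f_n)]^{(k)}(w_{i,n})=b$, where $w_{i,n}=z_n+\rho_n\zeta_{i,n}\to z_0$ and $w_{1,n}\ne w_{2,n}$ for large $n$. By the shared-value condition these $w_{i,n}$ are $b$-points of the fixed function $[Q(h)]^{(k)}$; since they accumulate at $z_0$ while the $b$-points of $[Q(h)]^{(k)}$ are isolated whenever $[Q(h)]^{(k)}\not\equiv b$, for large $n$ every $w_{i,n}$ would have to coincide with $z_0$, contradicting $w_{1,n}\ne w_{2,n}$ (and being even more plainly impossible if $z_0$ is not itself a $b$-point). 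In the remaining case $[Q(h)]^{(k)}\equiv b$, sharing forces $[Q(f_n)]^{(k)}\equiv b$, hence $\Phi\equiv b$, again a contradiction. Therefore $\Phi-b$ has at most one zero in $\mathbb{C}$.

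The contradiction is then produced by the value-distribution input, which I regard as the heart of the matter: for a nonconstant meromorphic $g$ on $\mathbb{C}$ and $m\ge k+2$, the function $(g^m)^{(k)}-b$ with $b\ne 0$ must have at least two distinct zeros. This is a Hayman-type statement that I would prove in two regimes. If $g$ is transcendental, the multiplicities forced on the zeros and poles of $g^m$ (each at least $m$) make the ramification terms $N(r,1/g^m)-\bar N(r,1/g^m)$ and the analogous pole contribution large, and feeding this into the Second Main Theorem together with the Milloux inequality for $(g^m)^{(k)}$ shows that $(g^m)^{(k)}$ takes the value $b$ infinitely often. If $g$ is a nonconstant rational function, a direct degree count on $(g^m)^{(k)}-b$, exploiting $m\ge k+2$, shows it cannot reduce to a single $b$-point. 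Combined with the previous paragraph's bound of at most one $b$-point for $\Phi=a_m(g^m)^{(k)}$, this yields the required contradiction, so $\mathcal{F}$ is normal at $z_0$, and since $z_0\in D$ was arbitrary, on all of $D$. The main obstacle is precisely the sharpness of the rational case, where the threshold $m\ge k+2$ (rather than $k+1$) is what is needed; by comparison, the convergence bookkeeping near the poles of $g$ and the fact that $\Phi_n\to\Phi$ only has to be controlled near the finite $b$-points are routine.
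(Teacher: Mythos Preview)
Your proof is correct and follows essentially the same route as the paper: apply Pang--Zalcman with rescaling exponent $-k/m$ so that $[Q(f_n)]^{(k)}(z_n+\rho_n\zeta)\to a_m(g^m)^{(k)}(\zeta)$, use Hurwitz together with the sharing hypothesis (against a fixed member of $\mathcal F$) to force $a_m(g^m)^{(k)}-b$ to have at most one zero, and contradict this via the Hayman-type fact that for $m\ge k+2$ the function $(g^m)^{(k)}$ takes each nonzero value at least twice, splitting into the transcendental and rational cases. The paper quotes the last step from Fang--Wang and Li--Gu rather than sketching it, and you are slightly more careful about the degenerate possibility $[Q(h)]^{(k)}\equiv b$, but otherwise the arguments coincide.
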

 
As an immediately consequence of Theorem~\ref{thm5} when $q=m\geq k+2$ and $a_0=0,$ we obtain the following special case which recovers the known result in \cite{LG}.
\begin{cor}\cite[Theorem 1]{LG}\label{corollary2} 
 Let $\mathcal{F}$ be a family of meromorphic functions defined in a domain $D.$ Let $k, m\geq k+2$ be positive integers. If for each pair of functions $f$ and $g$ in $\mathcal{F},$ $(f^m)^{(k)}$ and $(g^m)^{(k)}$ share a nonzero value $b$ in $D,$ then $\mathcal{F}$ is normal in $D.$
\end{cor}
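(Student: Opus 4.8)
The plan is to prove Theorem~\ref{thm5} by the standard normality machinery built on Zalcman's rescaling lemma, combined with a careful analysis of the differential polynomial $Q(f)$. Normality is a local property, so I would fix a point $z_0 \in D$ and argue that $\mathcal{F}$ is normal in some neighborhood of $z_0$. Suppose for contradiction that $\mathcal{F}$ is \emph{not} normal at $z_0$. Then by the Zalcman--Pang rescaling lemma (in the version adapted to the order of the relevant derivative, i.e.\ with rescaling exponent matching the $k$-th derivative situation), there exist functions $f_n \in \mathcal{F}$, points $z_n \to z_0$, and positive numbers $\rho_n \to 0^+$ such that the rescaled functions
\[
g_n(\zeta) = \rho_n^{-\alpha} f_n(z_n + \rho_n \zeta)
\]
converge spherically locally uniformly (on $\C$) to a nonconstant meromorphic function $g(\zeta)$ of finite order, where the exponent $\alpha$ is chosen so that the transformed version of the condition on $[Q(f)]^{(k)}$ passes to a clean limiting condition. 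The correct choice here, dictated by the leading term $z^q$ and the $k$-th derivative, is $\alpha = k/q$, so that $\rho_n^{-k}[Q(f_n)]^{(k)}(z_n+\rho_n\zeta)$ stabilizes.

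The heart of the argument is then to pass the sharing hypothesis to the limit. Since $b \neq 0$, after rescaling the shared value $b$ effectively disappears in the limit (it gets multiplied by $\rho_n^{k} \to 0$), and I expect to deduce that the limit function satisfies $[g^q]^{(k)} \neq 0$ on all of $\C$, or more precisely that $[g^q]^{(k)}$ omits the value $0$ there, coming from the fact that the zeros of $[Q(f_n)]^{(k)} - b$ become the zeros of the shared data. Equivalently, one transfers the shared-value structure into the statement that the limiting differential polynomial $G := [g^q]^{(k)}$ has no zeros (and the only surviving term of $Q$ in the limit is the leading monomial $z^q$, since lower-order terms rescale away). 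The goal is to contradict a Picard-type or Hayman-type theorem: a nonconstant meromorphic function $g$ on $\C$ for which $[g^q]^{(k)}$ omits $0$ (with $q \ge m \ge k+2$ controlling the pole/zero counting) cannot exist unless $g$ is constant, and the constraints $q \ge k+2$ are exactly what make the relevant second-main-theorem defect computation fail for a nonconstant $g$.

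The main obstacle, and the step requiring genuine care, is the limiting analysis of $[Q(f_n)]^{(k)}$ under rescaling when $Q$ is a general polynomial rather than the pure power $z^q$. Because the paper's novelty is precisely that the zeros of $Q$ need \emph{not} have large order, one cannot simply factor $Q$ into high-multiplicity factors; instead I would show that under the normalization $g_n = \rho_n^{-k/q} f_n(z_n + \rho_n \zeta)$, every term $a_j f_n^{j}$ with $j < q$ is negligible compared with $f_n^{q}$ in the rescaled picture (the lower powers carry a positive power of $\rho_n$ after differentiation and rescaling), so that $\rho_n^{-k}[Q(f_n)]^{(k)}(z_n + \rho_n\zeta) \to [g^q]^{(k)}(\zeta)$ locally uniformly away from poles. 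This requires splitting into the two cases according to whether $g_n$ stays bounded (so $g$ is entire-type) or has poles accumulating, and verifying the convergence separately on zero-neighborhoods and pole-neighborhoods of the limit. Once this convergence is established, the contradiction with Hayman's theorem on $[g^q]^{(k)} \neq 0$ (using $q \ge k+2$) closes the argument, and since $z_0$ was arbitrary, $\mathcal{F}$ is normal in $D$.

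For the corollary, I would simply specialize: taking $q = m$ and $a_0 = 0$ reduces $Q(z) = z^m$, so $[Q(f)]^{(k)} = (f^m)^{(k)}$, and the hypotheses of Theorem~\ref{thm5} become exactly those of \cite[Theorem~1]{LG}; the conclusion is immediate.
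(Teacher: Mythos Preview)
Your deduction of the corollary from Theorem~\ref{thm5} is fine; the problem lies in your proposed proof of Theorem~\ref{thm5} itself, where the rescaling is set up incorrectly.

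You choose the Zalcman exponent $\alpha=k/q$, tied to the \emph{top} degree of $Q$, and then assert that the lower-order terms $a_j f_n^{\,j}$ with $j<q$ carry a positive power of $\rho_n$ and hence vanish in the limit. The computation actually gives, for $g_n=\rho_n^{-k/q}f_n(z_n+\rho_n\zeta)$,
\[
[f_n^{\,\nu}]^{(k)}(z_n+\rho_n\zeta)=\rho_n^{(\nu-q)k/q}\,[g_n^{\,\nu}]^{(k)}(\zeta),
\]
so for $\nu<q$ the power of $\rho_n$ is \emph{negative} and these terms blow up, not disappear. The paper exploits the special shape $Q(z)=z^q+a_{q-1}z^{q-1}+\cdots+a_mz^m+a_0$ with $a_m\neq0$ and no monomials $z^j$ for $0<j<m$: the correct exponent is $\alpha=k/m$, tied to the \emph{lowest} nonconstant power. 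With $g_n=\rho_n^{-k/m}f_n(z_n+\rho_n\zeta)$ one gets
\[
[Q(f_n)]^{(k)}(z_n+\rho_n\zeta)=\sum_{\nu=m}^{q}a_\nu\,\rho_n^{(\nu-m)k/m}[g_n^{\,\nu}]^{(k)}\longrightarrow a_m[g^{m}]^{(k)},
\]
because now every exponent $(\nu-m)k/m$ with $\nu>m$ is positive. In particular there is no extra factor of $\rho_n^{-k}$ in front, and the shared value $b$ does \emph{not} rescale away: the limiting object is $a_m[g^{m}]^{(k)}-b$, not $[g^{q}]^{(k)}$ omitting $0$.

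Consequently the endgame is also different from what you sketch. One does not obtain an omitted value; rather, Hurwitz together with the sharing hypothesis against a fixed $f_1\in\mathcal F$ forces the zeros of $[Q(f_n)]^{(k)}-b$ near any two putative limit zeros $\zeta_0,\zeta_0^\ast$ to collapse to the single point $0$, so $a_m[g^{m}]^{(k)}-b$ has at most one zero. This is then contradicted by Lemma~\ref{Ligu} (rational $g$) and Lemma~\ref{transcendental} (transcendental $g$), after first excluding $a_m[g^m]^{(k)}\equiv b$ and $a_m[g^m]^{(k)}\ne b$ via Lemma~\ref{constant} and the constraint $m\ge k+2$. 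Your Hayman/Picard-type step, as stated, does not match the actual limiting condition and would need to be replaced by this ``at most one zero'' argument.
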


Next, we will give some conditions such that for meromorphic functions $f$ and $g$ in a family $\mathcal{F}$ if $[Q(f)]^{(k)}$ and $[Q(g)]^{(k)}$ share a small function
$\alpha,$  ignoring multiplicity, then $f$ must be either equal to $g$ or 
closely related to $g.$ Relating to this problem, in 2002, Fang and Fang \cite{FF} considered differential polynomials of  the form $f'f^n(f-1)^2,$ where $f$ is the meromorphic function. Recently, Li, Qiu and Xuan \cite{LQX} considered this problem to the case of higher order derivatives and expressions of the form $[f^nP(f)]^{(k)}.$  In 2017, An and Phuong \cite{AP} considered expressions of the form $[Q(f)]^{(k)},$ where
instead of assuming that the polynomials $Q$ have a high order zero, they 
assumed a more general hypothesis that $Q$ has a point of large ramification and small functions $\alpha$ without any restrictions on the zeros and poles.

 Denote by
$$Q'(z) = b\prod_{i=1}^{l}(z-\zeta_i)^{m_i}$$
 with $b\in\C^*,$ and denote by  $\upsilon$ and $h$  the indexes such that $1\le\upsilon\le  h\le l,$ and
\begin{align*}& m_1\ge m_2\ge \dots\ge m_\upsilon> k \ge m_{\upsilon+1}\ge \dots\ge m_l,\\
& m_1\ge m_2\ge \dots\ge m_h\ge k > m_{h+1}\ge \dots\ge m_l.\end{align*}

Considering the above, we obtain the following results.

\begin{theorem}\label{th2} 
 Let $f$ and $g$ be nonconstant meromorphic functions and $\alpha$ be a non-zero small function with respect to $f$.
Suppose $[Q(f)]^{(k)}$ and $[Q(g)]^{(k)}$ share $\alpha$  ignoring multiplicity. If 
$q>4k+12+\upsilon(5k+2)+5\sum_{i=\upsilon+1}^{l}m_i,$ and if one of the following holds 
\begin{enumerate}
\item[(i)]  $h\ge 4$; 
\item[(ii)] $h=3$ and $q\ne 2m_1-2k+2,$ \quad $q\ne \frac{3m_1-2k+3}{2},$ and $ q\ne 3m_i-2k+3,$ for all $i=1,2,3;$ or
\item[(iii)]  $h= 2$ and $f$ and $g$ share $\infty$ ignoring multiplicities,
\end{enumerate}
then 
$$Q(f)=Q(g)+c, \text{ for some constant } c.$$
\end{theorem}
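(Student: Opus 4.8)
The plan is to normalize by setting $F=[Q(f)]^{(k)}/\alpha$ and $G=[Q(g)]^{(k)}/\alpha$, so that the hypothesis becomes that $F$ and $G$ share the value $1$ ignoring multiplicity, up to $S(r,f)$-terms. First I would record the Nevanlinna bookkeeping that drives everything: a Milloux--Clunie-type inequality relating $T(r,f)$ to $T(r,[Q(f)]^{(k)})$, the pole estimate $N_2(r,F)\le 2\overline N(r,f)+S(r,f)$ (a pole of $f$ of order $p$ becomes a pole of $[Q(f)]^{(k)}$ of order $qp+k$), and, crucially, the zero estimate: at a simple $\zeta_i$-point of $f$ the difference $Q(f)-Q(\zeta_i)$ vanishes to order $m_i+1$, so $[Q(f)]^{(k)}$ vanishes there to order $m_i+1-k$, which is positive exactly when $i\le h$ and is at least $2$ exactly when $i\le \upsilon$. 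Thus the $h$ critical values with $m_i\ge k$ are the genuine sources of zeros of $F$, and every occurrence of $\alpha$ and of the coefficients $a_j$ is absorbed into $S(r,f)$; I would also check at the outset that $f$ and $g$ have comparable growth, so that the error terms $S(r,f)$ and $S(r,g)$ may be used interchangeably.

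The argument then splits on the auxiliary function
$$H=\left(\frac{F''}{F'}-\frac{2F'}{F-1}\right)-\left(\frac{G''}{G'}-\frac{2G'}{G-1}\right).$$
Its decisive property is that $H$ vanishes at every common simple $1$-point of $F$ and $G$, while its poles occur only at the multiple $1$-points, at the poles of $F$ and $G$, and at the zeros of $F'$ and $G'$, each counted without multiplicity.

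The case $H\not\equiv 0$ is where I expect the main difficulty. Here $N_{1)}(r,\tfrac1{F-1})\le N(r,H)+S(r,f)$, and feeding the pole structure of $H$ together with the second main theorem for $F$ and $G$ --- and the truncated estimates for $N_2(r,1/F)$ and $N_2(r,F)$ expressed through the $m_i$, $\upsilon$ and $h$ --- yields an inequality of the shape $q\,\big(T(r,f)+T(r,g)\big)\le \big(4k+12+\upsilon(5k+2)+5\sum_{i=\upsilon+1}^{l}m_i\big)\big(T(r,f)+T(r,g)\big)+S(r,f)$. The delicate point is the exact accounting of how the $k$-fold differentiation degrades the high-order zeros coming from the $\upsilon$ points with $m_i>k$ (which aggregate to the weight $5k+2$ apiece) against the low-order critical points $i>\upsilon$ (aggregating to $5m_i$); reproducing these constants is the heart of the estimate, and the hypothesis $q>4k+12+\upsilon(5k+2)+5\sum_{i=\upsilon+1}^{l}m_i$ is precisely what renders the inequality contradictory, so that $H\not\equiv 0$ is impossible.

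The case $H\equiv 0$ delivers the conclusion. Integrating twice gives a bilinear relation $\tfrac1{F-1}=\tfrac{A}{G-1}+B$ with constants $A\ne 0$ and $B$, and I would then eliminate the degenerate configurations in which $B\ne 0$ or $A\ne 1$. Each such configuration forces $F$, hence $[Q(f)]^{(k)}$, to assume a third fixed value along the preimages of the critical values, so that $f$ and $g$ would acquire more ramified or deficient values than the second main theorem permits once $h\ge 4$; the borderline counts for $h=3$ can survive only for the finitely many exceptional degrees $q\in\{\,2m_1-2k+2,\ (3m_1-2k+3)/2,\ 3m_i-2k+3\,\}$ excluded in (ii), while for $h=2$ the residual ambiguity is exactly what the extra hypothesis in (iii), that $f$ and $g$ share $\infty$, removes. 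In every admissible case only $A=1$, $B=0$ survives, i.e. $F=G$, whence $[Q(f)]^{(k)}=[Q(g)]^{(k)}$; integrating $k$ times gives $Q(f)=Q(g)+p$ with $\deg p\le k-1$, and a final comparison of the forced critical structure of $Q(f)$ and $Q(g)$ shows that $p$ must reduce to a constant, yielding $Q(f)=Q(g)+c$.
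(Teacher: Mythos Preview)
Your overall architecture matches the paper's: set $F=[Q(f)]^{(k)}/\alpha$, $G=[Q(g)]^{(k)}/\alpha$, introduce the auxiliary $H$, and when $H\not\equiv0$ derive the inequality that contradicts the degree bound $q>4k+12+\upsilon(5k+2)+5\sum_{i>\upsilon}m_i$. That part is fine and is exactly how the paper proceeds (its Lemma~4.1 packages the $H$-argument abstractly, then Theorem~4.1 feeds in the estimates for $N_2$ and $\overline N$ in terms of the $m_i$).

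The gap is in your description of the $H\equiv0$ branch. You treat all the degenerate M\"obius relations $\frac{1}{F-1}=\frac{A}{G-1}+B$ as being eliminated by the hypotheses on $h$, but that is not how the argument runs. In the paper, the generic degenerations ($A\neq1$ or $B\neq0$, except for one special parameter choice) are \emph{not} killed by (i)--(iii); they are killed by the \emph{same} Second Main Theorem inequality as the $H\not\equiv0$ case, i.e.\ by the degree hypothesis alone. Only the single residual relation $FG\equiv1$, that is $[Q(f)]^{(k)}[Q(g)]^{(k)}=\alpha^2$, survives this filter, and it is precisely this product identity that the hypotheses (i), (ii), (iii) on $h$ are designed to exclude. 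The paper does not redo this; it invokes Lemma~3.4 of \cite{AP}, which analyses the equation $[Q(f)]^{(k)}[Q(g)]^{(k)}=\alpha^2$ directly by comparing the orders of zeros and poles on both sides and showing it forces $h\le2$ (or $h=3$ with the listed exceptional degrees, or $h\le1$ if $f,g$ share $\infty$). Your sketch (``$F$ assumes a third fixed value along preimages of critical values, contradicting the Second Main Theorem once $h\ge4$'') does not describe that argument and conflates it with the M\"obius elimination; as written it would not produce the specific exceptional degrees in (ii).

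One more point: once you reach $[Q(f)]^{(k)}=[Q(g)]^{(k)}$, integrating gives $Q(f)=Q(g)+p(z)$ with $\deg p\le k-1$, and reducing $p$ to a constant is itself a nontrivial step. The paper again cites a lemma from \cite{AP} requiring $q>2l+2k+4$; your ``final comparison of the forced critical structure'' should be replaced by an explicit Nevanlinna estimate of this type.
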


Readers can find in \cite{AD, AWW, AZ, AZ1, F} conditions of $Q$ such that the equation   $Q( f ) = Q(g)+c$ has no nontrivial meromorphic function solution, i.e if   $Q(f)=Q(g),$ then $f=g.$

The polynomial $Q(z)$ is said to satisfy \textit{Hypothesis I} if
 $$Q(\zeta_i) \neq Q(\zeta_j)\ {\rm whenever}\ i\neq j, \  i, j = 1, 2, \ldots, l ,$$
 or in other words $Q$ is injective on the roots of $Q'$.

 As a consequence of Theorem~\ref{th2} and Theorem 1.4 in \cite{AP}, we obtain the following 
\begin{theorem}\label{th4}
 Let $f$ and $g$ be nonconstant meromorphic functions satisfying the conditions in Theorem~\ref{th2}. Let $Q(z)$ be a polynomial of degree at least 7.
\begin{enumerate}
\item[(i)] If there exists $i$ ($1\leq i\leq l$) such that $m_i>\frac{q+1}{2},$ then $c=0.$
\item[(ii)] If  $Q(z)$ satisfies the Hypothesis I then $f=g$, except when $Q(z)=(z-\zeta_1)^{m_1}(z-\zeta_2)$.
\end{enumerate}
\end{theorem}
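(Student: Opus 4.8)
The plan is to combine Theorem~\ref{th2} with the uniqueness results imported from \cite{AP} (Theorem 1.4). Since $f$ and $g$ satisfy the hypotheses of Theorem~\ref{th2}, we already know that $Q(f)=Q(g)+c$ for some constant $c$. The task now splits into two parts: first, under the additional ramification hypothesis in (i), force $c=0$; and second, once $c=0$ is known and Hypothesis~I holds, upgrade the identity $Q(f)=Q(g)$ to $f=g$ using the injectivity of $Q$ on the roots of $Q'$, invoking the cited result to rule out the existence of nontrivial solutions.

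For part (i), I would start from $Q(f)=Q(g)+c$ and analyze the defect/ramification relation provided by the existence of a root $\zeta_i$ of $Q'$ with multiplicity $m_i>\frac{q+1}{2}$. The idea is that a high-multiplicity critical point makes $Q$ behave like it has a point of large ramification, so the value $Q(\zeta_i)$ is heavily ramified for $Q(f)$. First I would write $Q(z)-Q(\zeta_i)=(z-\zeta_i)^{m_i+1}R(z)$ with $\deg R=q-m_i-1<\frac{q-1}{2}$, so that the ramified value $Q(\zeta_i)$ is taken by $Q(f)$ with multiplicity at least $m_i+1$ off the zeros of $R$. Then I would apply the Second Main Theorem to $Q(f)$ (equivalently use the ramification estimate for $Q$ as in \cite{AP}) to bound $N(r,\tfrac{1}{Q(f)-Q(\zeta_i)})$ and the analogous count for $Q(g)=Q(f)-c$; comparing the characteristic functions $T(r,Q(f))=T(r,Q(g))+S(r)$ forces the constant shift $c$ to be incompatible with both $Q(f)$ and $Q(g)=Q(f)-c$ having a value of such large ramification unless $c=0$. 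The arithmetic condition $m_i>\frac{q+1}{2}$ is exactly what is needed to make the two ramification contributions overlap and exceed the Nevanlinna bound $2T(r)+S(r)$, yielding a contradiction when $c\ne 0$.

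For part (ii), assuming $Q$ satisfies Hypothesis~I, I would combine (i) or the degree condition $q\ge 7$ to conclude $c=0$, so that $Q(f)=Q(g)$. Now I invoke Theorem~1.4 of \cite{AP} (and the references \cite{AD, AWW, AZ, AZ1, F} mentioned in the excerpt): under Hypothesis~I, the equation $Q(f)=Q(g)$ has only the trivial solution $f=g$ for polynomials of degree at least $7$, with the sole exception $Q(z)=(z-\zeta_1)^{m_1}(z-\zeta_2)$. The exceptional case arises precisely because when $Q'$ has exactly two roots, one of multiplicity $m_1$ and a simple one, the functional equation $Q(f)=Q(g)$ admits a nontrivial Möbius-type relation between $f$ and $g$ that Hypothesis~I cannot exclude; so this configuration must be explicitly set aside.

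The main obstacle I anticipate is part (i): carefully extracting the quantitative ramification estimate that links $m_i>\frac{q+1}{2}$ to a strict violation of the Second Main Theorem inequality when $c\ne 0$. One must be precise about how the small function $\alpha$ and the shared-value hypothesis interact with the characteristic functions, and ensure the error terms $S(r,f)$ genuinely remain negligible against the main terms. The degree hypothesis $q\ge 7$ and the explicit numerical thresholds from Theorem~\ref{th2} should guarantee enough room, but verifying that the counting functions for $Q(f)$ and $Q(f)-c$ at the super-ramified value cannot coexist is the delicate computational heart of the argument, and it is here that the cited uniqueness machinery from \cite{AP} must be applied rather than re-derived.
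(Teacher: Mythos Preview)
Your overall strategy matches the paper's: the paper offers no standalone proof of this theorem but simply records it as a consequence of Theorem~\ref{th2} together with Theorem~1.4 of \cite{AP}. You do the same, and your sketch of part~(i) is a reasonable outline of the ramification argument that underlies the cited result.

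There is, however, a logical slip in your treatment of part~(ii). You write that you would ``combine (i) or the degree condition $q\ge 7$ to conclude $c=0$'', but neither is available: the hypothesis of (i), namely the existence of some $m_i>\tfrac{q+1}{2}$, is \emph{not} assumed in (ii), and the bare inequality $q\ge 7$ certainly does not force $c=0$ on its own. So you cannot reduce (ii) to the equation $Q(f)=Q(g)$ before invoking \cite{AP}. What Theorem~1.4 of \cite{AP} actually does (and what the paper is relying on) is to treat the equation $Q(f)=Q(g)+c$ directly under Hypothesis~I and $q\ge 7$, showing simultaneously that $c=0$ and $f=g$ except in the stated degenerate case. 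In other words, the passage from $Q(f)=Q(g)+c$ to $f=g$ under Hypothesis~I is a single black-box step from the cited reference, not a two-stage argument in which $c=0$ is established first by a separate mechanism. Once you correct this, your proposal coincides with the paper's.
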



In the special case that $Q(z)=z^nP(z),$ we recover  known result in \cite{LQX}, as special case of our result.

\begin{cor}\cite[Theorem 1]{LQX}\label{corollary1} 
 Let $f$ and $g$ be nonconstant meromorphic functions and $\alpha$ be a non-zero small function respect to $f$. Let $P$ be a polynomial of degree $m$.
Suppose $[f^nP(f)]^{(k)}$ and $[g^nP(g)]^{(k)}$ share $\alpha$ ignoring multiplicities. 
 If $n>4m+9k+14$
 then one of the following holds
\begin{enumerate}
\item[(i)] $f^nP(f)=g^nP(g)$; or
\item[(ii)] $[f^nP(f)]^{(k)}[f^nP(f)]^{(k)}=\alpha^2.$
\end{enumerate}
 \end{cor}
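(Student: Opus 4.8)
The plan is to specialize Theorem~\ref{th2} to $Q(z)=z^nP(z)$ and to run the method behind its proof, keeping the two branches that the case hypotheses (i)--(iii) of Theorem~\ref{th2} would otherwise eliminate. Set $F=[Q(f)]^{(k)}$, $G=[Q(g)]^{(k)}$ and $q=\deg Q=n+m$. The first step is to record the ramification data of $Q'$. Since
\[
Q'(z)=z^{n-1}\bigl(nP(z)+zP'(z)\bigr),
\]
the origin is a root of $Q'$ of multiplicity $m_1=n-1$ (only larger if $P(0)=0$, which is more favorable, so I take $m_1=n-1$), while $nP(z)+zP'(z)$ has degree $m$, so the remaining roots $\zeta_2,\dots,\zeta_l$ satisfy $\sum_{i\ge2}m_i\le m$. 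Because $n>4m+9k+14$ forces $m_1=n-1>k$, we have $\upsilon\ge1$ and $h\ge1$; note that here $h$ may well equal $1$, which is precisely why Theorem~\ref{th2} cannot be quoted verbatim and why the argument (rather than only its statement) is needed.

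Next I would verify that the degree hypothesis of Theorem~\ref{th2} reduces exactly to $n>4m+9k+14$. Splitting off the origin, $\upsilon(5k+2)=(5k+2)+(\upsilon-1)(5k+2)$, and since each of the $\upsilon-1$ roots counted beyond $\zeta_1$ has multiplicity $m_i\ge k+1$, one has $(\upsilon-1)(5k+2)\le\tfrac{5k+2}{k+1}\sum_{i=2}^{\upsilon}m_i<5\sum_{i=2}^{\upsilon}m_i$. Hence
\[
\upsilon(5k+2)+5\sum_{i=\upsilon+1}^{l}m_i<(5k+2)+5\sum_{i=2}^{l}m_i\le 5k+2+5m,
\]
so that $4k+12+\upsilon(5k+2)+5\sum_{i=\upsilon+1}^{l}m_i<9k+14+5m$. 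Since $q=n+m>(4m+9k+14)+m=9k+14+5m$, the strict degree inequality demanded by the proof of Theorem~\ref{th2} holds.

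With the degree bound secured, I would carry out the auxiliary-function step. Form the Lahiri-type function
\[
H=\left(\frac{F''}{F'}-\frac{2F'}{F-\alpha}\right)-\left(\frac{G''}{G'}-\frac{2G'}{G-\alpha}\right)
\]
(with the usual modifications since $\alpha$ is a small function) and show $H\equiv0$. This is the main obstacle. One must estimate the counting functions of the zeros and poles of $F-\alpha$ and $G-\alpha$ in terms of $T(r,f)$ and $T(r,g)$, using that zeros of $Q(f)$ come from zeros of $f$ and of the $f-\zeta_i$ of prescribed high multiplicity, so that applying $[\,\cdot\,]^{(k)}$ produces only a controlled contribution to $N\!\left(r,\tfrac{1}{F-\alpha}\right)$; the Second Main Theorem then turns $H\not\equiv0$ into an inequality that the degree estimate above renders contradictory. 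Once $H\equiv0$, two integrations yield a M\"obius relation between $F$ and $G$, and the standard pole/shared-value bookkeeping collapses it to the two possibilities $F=G$ or $FG=\alpha^2$.

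Finally I would resolve the two branches. The branch $FG=\alpha^2$, i.e. $[f^nP(f)]^{(k)}[g^nP(g)]^{(k)}=\alpha^2$, is alternative (ii). In the branch $F=G$, i.e. $[Q(f)]^{(k)}=[Q(g)]^{(k)}$, the same reduction used in the proof of Theorem~\ref{th2} (integrating $k$ times and comparing growth, the left side having characteristic $\sim qT(r,f)$ so that the polynomial of degree $\le k-1$ collapses to a constant) gives $Q(f)=Q(g)+c$. Since $\deg Q=q\ge7$ and $m_1=n-1>\tfrac{q+1}{2}$ (equivalently $n>m+3$, which is implied by $n>4m+9k+14$), Theorem~\ref{th4}(i) upgrades this to $c=0$, i.e. $f^nP(f)=g^nP(g)$, which is alternative (i). This exhibits the corollary as a consequence of the proof of Theorem~\ref{th2} together with Theorem~\ref{th4}.
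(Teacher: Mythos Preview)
Your proposal is correct and arrives at the same conclusion by the same underlying ideas, but it takes an unnecessary detour. You correctly note that Theorem~\ref{th2} cannot be quoted verbatim because $h$ may equal $1$; however, the fix is not to re-run the auxiliary-function argument, but simply to cite Theorem~\ref{th1} (stated and proved in Section~4). That theorem has no hypothesis on $h$ whatsoever: from the degree bound alone it already yields the dichotomy ``$Q(f)=Q(g)+c$ or $[Q(f)]^{(k)}[Q(g)]^{(k)}=\alpha^2$''. The paper's proof of the corollary is accordingly just two short verifications: (a) the degree inequality of Theorem~\ref{th1}, and (b) $m_1=n-1>(q+1)/2$, after which Theorem~\ref{th1} and Theorem~\ref{th4}(i) finish the job exactly as you do in your final paragraph. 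Your degree verification via $(5k+2)/(k+1)<5$ and the paper's via $5m_j\ge 5(k+1)$ are equivalent rearrangements of the same estimate.

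One small blemish in your chain: when $\upsilon=1$ the strict inequality $(\upsilon-1)(5k+2)<5\sum_{i=2}^{\upsilon}m_i$ degenerates to $0=0$, so you only get $\upsilon(5k+2)+5\sum_{i>\upsilon}m_i\le 5k+2+5m$ rather than strictly less. This is harmless, since $q=n+m>5m+9k+14$ still gives the required strict inequality $q>4k+12+\upsilon(5k+2)+5\sum_{i>\upsilon}m_i$, but the ``$<$'' should be ``$\le$'' there.
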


\section{Results Needed from  Nevanlinna's Theory}

We recall some standard definitions and results in Nevanlinna theory (see \cite{CY, R} for more detail). 

Let $f$ be a meromorphic function on $\C.$ Let $n(t,f)$ be the number of poles of $f(z)$ in $|z|\leq t,$ each counted with correct multiplicity,
and let $\overline{n}(t,f)$ denote the number of poles of $f(z)$
in $|z|\leq t,$ where each multiple pole is counted only once. The {\it counting function} of poles is defined as follows $$ N(r,f):= \int_{0}^{r}\Big [\frac{n(t,f)-n(0,f)}{t}\Big ]dt+n(0,f)\log r,$$ with a similar definition for $\overline{N}(r,f).$ The {\it proximity function} and {\it characteristic function} are defined respectively as follows: 
$$m(r,f):=\frac{1}{2\pi}\int_{0}^{2\pi}\log^{+}| f(re^{i\theta})|d\theta,  
\textnormal{~and}$$
 $$ T(r,f):=m(r,f)+N(r,f). $$
We denote by $N_{p)}(r,f)$ the counting function of poles of $f$ which have multiplicity at most $p$, each pole counted with its multiplicity, by $N_{(p}(r,f)$ the counting function of poles of $f$ which have multiplicity at least $p$, each pole counted with its multiplicity, and the corresponding reduced counting functions are denoted by $\overline{N}_{p)}(r,f)$ and $\overline{N}_{(p}(r,f).$ We denote by $\overline{N}_L(r,\dfrac{1}{f-\alpha})$ the counting function for zeros of both $f-\alpha$ and $g-\alpha$ about which $f-\alpha$ has larger multiplicity than $g-\alpha,$ with multiplicity not being counted, denote by $N_{E}^{1)}(r,\dfrac{1}{f-\alpha})$ the counting function for common simple zeros of both $f-\alpha$ and $g-\alpha,$ by $N_{E}^{1)}\big(r,\frac{1}{f-\alpha}|\alpha = 0\big)$ the counting function of common simple zeros of  both $f-\alpha$ and $g-\alpha$ which are zeros of $\alpha$ and by $N_{E}^{1)}\big(r,\frac{1}{f-\alpha}|\alpha = \infty\big)$ the counting function of common simple zeros of both $f-\alpha$ and $g-\alpha$  which are poles of $\alpha$ and  denote by $N_{E}^{1)}\big(r,\frac{1}{f-\alpha}|\alpha\ne 0,\alpha\ne\infty\big)$ the counting function of common simple zeros of both $f-\alpha$ and $g-\alpha$  which are not zeros and poles of $\alpha,$ and denote by $\overline{N}_{E}^{(2}(r,\dfrac{1}{f-\alpha})$ the counting function for common multiple zeros of both $f-\alpha$ and $g-\alpha$ with the same multiplicity, where multiplicity is not counted. Similarly, we have the notations $N_{p)}(r,g),$ $N_{(p}(r,g),$ $\overline{N}_{p)}(r,g),$ $\overline{N}_{(p}(r,g),$ $\overline{N}_L(r,\dfrac{1}{g-\alpha}),$ $N_{E}^{1)}(r,\dfrac{1}{g-\alpha}),$ $N_{E}^{1)}\big(r,\frac{1}{g-\alpha}|\alpha = 0\big),$ $N_{E}^{1)}\big(r,\frac{1}{g-\alpha}|\alpha = \infty\big),$ $N_{E}^{1)}\big(r,\frac{1}{g-\alpha}|\alpha\ne 0,\alpha\ne\infty\big)$ and $N_{E}^{(2}(r,\dfrac{1}{g-\alpha}).$

Let $a$ be a finite complex number, and let $p$ be a positive integer.
We denote by $N_{p}\Big (r,\frac{1}{f-a}\Big)$ the counting function for zeros of $f-a$ where
 a zero of multiplicity $m$ is counted $m$ times if $m \le p$
and $p$ times if $m > p$.

The logarithmic derivative lemma can be stated as follows (see \cite{R}).

\begin{lemma}[Logarithmic Derivative Lemma]  Let $f$ be a nonconstant meromorphic function on $\C.$ Then $$ m\left(r,\frac{f'}{f}\right)=S(r,f) $$ as $r\rightarrow\infty$ outside a subset of finite measure.
\end{lemma}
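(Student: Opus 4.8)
The plan is to deduce the estimate from the Poisson--Jensen representation of $\log|f|$ combined with a Borel-type growth lemma that produces the exceptional set. Fix $0<r<R$ and let $(a_\mu)$ and $(b_\nu)$ denote, respectively, the zeros and poles of $f$ in $|z|<R$, listed with multiplicity. Differentiating the Poisson--Jensen formula yields, for $|z|=r$, an identity expressing $f'(z)/f(z)$ as a sum of three pieces: a Poisson-type integral of $\log|f(Re^{i\phi})|$ against the kernel $2Re^{i\phi}/(Re^{i\phi}-z)^2$, and two rational sums $\sum_\mu\bigl(\frac{1}{z-a_\mu}-\frac{\overline{a}_\mu}{R^2-\overline{a}_\mu z}\bigr)$ and $-\sum_\nu\bigl(\frac{1}{z-b_\nu}-\frac{\overline{b}_\nu}{R^2-\overline{b}_\nu z}\bigr)$ running over the zeros and poles.

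First I would set $z=re^{i\theta}$, take $\log^+$ of both sides, and integrate in $\theta$, using the subadditivity bound $\log^+\bigl|\sum_j u_j\bigr|\le\sum_j\log^+|u_j|+\log(\text{number of terms})$. The integral piece is controlled, after bounding the kernel in terms of $R/(R-r)$ and invoking concavity of the logarithm, by a multiple of $\log^+\bigl(m(R,f)+m(R,1/f)\bigr)+\log\frac{R}{R-r}$, hence by $O\bigl(\log^+T(R,f)+\log\frac{R}{R-r}\bigr)$. For each rational sum I would use that $\frac{1}{2\pi}\int_0^{2\pi}\log^+\frac{1}{|re^{i\theta}-a|}\,d\theta$ is bounded independently of $a$, so that the zero- and pole-sums contribute $O\bigl(\log^+n(R,1/f)+\log^+n(R,f)+1\bigr)$, which reduces to $O(\log^+T(R,f))$ by the elementary inequality $n(r)\log\frac{R}{r}\le N(R)\le T(R)$. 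Collecting terms gives the preliminary estimate
$$m\Bigl(r,\frac{f'}{f}\Bigr)\le C\Bigl(\log^+T(R,f)+\log\frac{R}{R-r}+\log^+R+1\Bigr),\qquad 0<r<R,$$
with $C$ an absolute constant.

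The final step removes the free radius $R$. Taking $R=r+1/T(r,f)$ and applying the standard Borel (calculus) lemma shows that the comparison $T(R,f)\le 2T(r,f)$ holds for all $r$ outside an exceptional set $E$ of finite Lebesgue measure; substituting this into the preliminary estimate gives $m(r,f'/f)=O\bigl(\log T(r,f)+\log r\bigr)=S(r,f)$ as $r\to\infty$, $r\notin E$. The main obstacle is precisely this last step: the delicate point is not the algebra of the Poisson--Jensen differentiation but the control of the exceptional set, i.e.\ guaranteeing that the growth comparison can fail only on a set of finite measure. This is exactly what forces the conclusion to hold outside a subset of finite measure rather than for every $r$.
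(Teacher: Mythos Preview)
The paper does not contain a proof of this lemma at all: it is merely stated as a classical fact with a reference to \cite{R}, and is then used as a tool later on. So there is no ``paper's own proof'' to compare against.

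Your outline is the standard classical argument (the Nevanlinna--Hayman route via differentiated Poisson--Jensen plus the Borel growth lemma), and the overall structure is sound. Two small points are worth tightening if you intend this as a full proof rather than a sketch. First, when you pass from $\log^+ n(R,f)$ to $\log^+ T(R,f)$ you invoke $n(r)\log(R/r)\le N(R)$; note that this compares $n$ at one radius with $N$ at a strictly larger one, so to bound $n(R,\cdot)$ you implicitly need a third radius $\rho>R$, and the resulting $\log(\rho/R)$ term must be absorbed into the final choice of radii. Second, the claim that $\frac{1}{2\pi}\int_0^{2\pi}\log^+\frac{1}{|re^{i\theta}-a|}\,d\theta$ is bounded independently of $a$ is true but not entirely trivial when $|a|$ is near $r$; the clean way is via Jensen's formula applied to $z\mapsto z-a$, which gives the bound $\log^+(1/r)+\log 2$. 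Neither of these affects the validity of the approach, and your identification of the Borel lemma as the source of the exceptional set is exactly right.
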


We state the first and second fundamental theorem in Nevanlinna theory (see e.g. \cite{H}, \cite{R}):

\begin{theorem}[First fundamental theorem] Let $f$ be a meromorphic function,
and let $c$ be a complex number. Then
$$ T\left(r,\frac{1}{f-c}\right)=T(r,f)+O(1). $$
\end{theorem}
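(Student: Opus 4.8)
The plan is to derive the stated identity from Jensen's formula, which is the analytic foundation underlying the characteristic function. I would first reduce the claim to two bounded-difference facts: (a) inversion invariance, $T(r, 1/h) = T(r, h) + O(1)$ for every nonconstant meromorphic $h$; and (b) translation invariance of the target, $T(r, f - c) = T(r, f) + O(1)$. Granting both and taking $h = f - c$ gives $T\left(r, \frac{1}{f-c}\right) = T(r, f-c) + O(1) = T(r, f) + O(1)$, which is exactly the assertion.

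For step (a) I would start from Jensen's formula. With the normalization of $N(r, \cdot)$ used in this paper (which already absorbs the contribution at the origin through the term $n(0, \cdot)\log r$), Jensen's formula takes the clean form
$$\frac{1}{2\pi}\int_0^{2\pi}\log\left|f(re^{i\theta})\right|\,d\theta = \log|c_\lambda| + N\left(r, \frac{1}{f}\right) - N(r, f),$$
where $c_\lambda$ denotes the first nonvanishing Laurent coefficient of $f$ at $0$. Applying the pointwise identity $\log|w| = \log^{+}|w| - \log^{+}|1/w|$ to $w = f(re^{i\theta})$ rewrites the left-hand side as $m(r, f) - m(r, 1/f)$. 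Substituting and rearranging yields $m(r, f) + N(r, f) = m(r, 1/f) + N(r, 1/f) + \log|c_\lambda|$, that is, $T(r, f) = T(r, 1/f) + O(1)$.

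For step (b) I would treat the two pieces of $T = m + N$ separately. Since the poles of $f - c$ coincide with those of $f$, counted with the same multiplicities, $N(r, f - c) = N(r, f)$ exactly. For the proximity term, the elementary estimate $\log^{+}|a + b| \le \log^{+}|a| + \log^{+}|b| + \log 2$ applied with $a = f - c$, $b = c$ and then symmetrically gives $m(r, f - c) = m(r, f) + O(1)$. Adding the two contributions produces $T(r, f - c) = T(r, f) + O(1)$.

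Once (a) and (b) are in hand the theorem follows immediately by the reduction above. The only real obstacle is establishing Jensen's formula and handling the normalization at the origin carefully: if $f$ has a zero or pole at $0$ the naive formula acquires a $\lambda \log r$ term, but this is precisely what the $n(0, \cdot)\log r$ summand in the definition of $N(r, \cdot)$ is designed to absorb, so the correction remains a genuine $O(1)$. Everything else is the elementary bookkeeping of $m$ and $N$ under inversion and translation.
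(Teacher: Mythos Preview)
Your argument is correct and is exactly the standard textbook derivation of the First Fundamental Theorem: reduce to inversion invariance via Jensen's formula and to translation invariance via the elementary $\log^{+}$ inequality, then combine. The handling of the origin normalization and the decomposition $T=m+N$ are both fine.

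Note, however, that the paper does not actually prove this statement. It is listed in Section~2 as background material from Nevanlinna theory, with references to Hayman and Ru, and no argument is supplied. So there is no ``paper's own proof'' to compare against; your proposal simply fills in the classical proof that the cited references contain, and it does so along the same lines one finds there.
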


\begin{theorem}[Second fundamental theorem]  Let $f$ be a nonconstant meromorphic function on $\C.$ Let $a_1,\cdots,a_q$ be  distinct meromorphic functions on $\C$. Assume that $a_i$ are small functions
with respect to $f$ for all $i=1, ...,q.$ Then, the inequality
$$ (q-2)T(r,f)\leq \sum_{j=1}^{q}\overline{N}\left(r,\frac{1}{f-a_{j}}\right)+S(r,f), $$ 
holds for all $r$ outside a set $E\subset (0,+\infty)$ 
with finite Lebesgue measure.\end{theorem}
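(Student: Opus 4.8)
The plan is to follow Nevanlinna's classical argument, adapted to moving targets by absorbing every quantity built from the small functions $a_j$ into the error term $S(r,f)$. Throughout I would use freely that $T(r,a_j)=S(r,f)$, that $T(r,a_i-a_j)=S(r,f)$ for $i\ne j$, and that $T(r,f-a_j)=T(r,f)+S(r,f)$; the last identity together with the First Fundamental Theorem gives, for each $j$,
\[
m\left(r,\frac{1}{f-a_j}\right)+N\left(r,\frac{1}{f-a_j}\right)=T(r,f)+S(r,f),
\]
so that summation yields $\sum_{j}m\big(r,\tfrac1{f-a_j}\big)=qT(r,f)-\sum_{j}N\big(r,\tfrac1{f-a_j}\big)+S(r,f)$. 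The whole problem is thereby reduced to bounding the sum of proximity functions from above.

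First I would introduce the auxiliary function $F=\sum_{j=1}^{q}\frac{1}{f-a_j}$ and show that $\sum_{j}m\big(r,\tfrac1{f-a_j}\big)\le m(r,F)+S(r,f)$. This is the step where the targets being distinct is used: on the part of the circle $|z|=r$ where $f$ is close to a fixed $a_j$ it is automatically far from every other $a_i$, because $|a_i-a_j|$ is bounded below off an angular set whose contribution is $S(r,f)$ (here $T(r,1/(a_i-a_j))=S(r,f)$ is essential), so the $j$-th term dominates $F$ while the remaining terms stay controlled.

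Next I would estimate $m(r,F)$ by the Logarithmic Derivative Lemma. Writing $F=\frac{Ff'}{f'}$ gives $m(r,F)\le m\big(r,\tfrac1{f'}\big)+m(r,Ff')+O(1)$, and since $Ff'=\sum_j\frac{f'}{f-a_j}$ with $\frac{f'}{f-a_j}=\frac{(f-a_j)'}{f-a_j}+\frac{a_j'}{f-a_j}$, the term $m(r,Ff')=S(r,f)$ follows from the Logarithmic Derivative Lemma applied to $f-a_j$ together with $m(r,a_j')=S(r,f)$. Then $m\big(r,\tfrac1{f'}\big)\le T(r,f')-N\big(r,\tfrac1{f'}\big)+O(1)$, and bounding $T(r,f')\le T(r,f)+\overline N(r,f)+S(r,f)$ (via $m(r,f')\le m(r,f)+S(r,f)$ and $N(r,f')=N(r,f)+\overline N(r,f)$) produces
\[
\sum_{j}m\left(r,\frac{1}{f-a_j}\right)\le T(r,f)+\overline N(r,f)-N\left(r,\frac{1}{f'}\right)+S(r,f).
\]
Combining this with the First Fundamental Theorem identity above rearranges to
\[
(q-1)T(r,f)\le \sum_{j}N\left(r,\frac{1}{f-a_j}\right)-N\left(r,\frac{1}{f'}\right)+\overline N(r,f)+S(r,f).
\]

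The last ingredient, and the one I expect to be the main obstacle in the moving-target setting, is the multiplicity cancellation
\[
\sum_{j}N\left(r,\frac{1}{f-a_j}\right)-N\left(r,\frac{1}{f'}\right)\le \sum_{j}\overline N\left(r,\frac{1}{f-a_j}\right)+S(r,f).
\]
For constant targets this is an exact local computation: a zero of $f-a_j$ of multiplicity $p$ is a zero of $f'$ of multiplicity $p-1$, so $p-(p-1)=1$ matches the reduced count. For small moving targets one only has $(f-a_j)'=f'-a_j'$, and the discrepancy is governed by the zeros of $a_j'$ and by coincidences among the $a_j$; the task is to show that all these exceptional contributions are $S(r,f)$, which again rests on $T(r,a_j)=S(r,f)$. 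Granting this, substitution gives $(q-1)T(r,f)\le \sum_j\overline N\big(r,\tfrac1{f-a_j}\big)+\overline N(r,f)+S(r,f)$, and finally moving $\overline N(r,f)\le T(r,f)+S(r,f)$ to the left-hand side yields the claimed inequality $(q-2)T(r,f)\le\sum_j\overline N\big(r,\tfrac1{f-a_j}\big)+S(r,f)$. The exceptional set $E$ of finite Lebesgue measure is precisely the set on which the Logarithmic Derivative Lemma's error terms fail to be $S(r,f)$.
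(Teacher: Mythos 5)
The paper does not actually prove this statement: it is quoted as background, with pointers to Hayman's and Ru's books, so your proposal has to be judged against the known literature rather than against an in-paper argument. What you have written is the classical proof for \emph{constant} targets, and it breaks for moving targets at two places. The first is hidden in your claim that $m(r,Ff')=S(r,f)$. Decomposing $\frac{f'}{f-a_j}=\frac{(f-a_j)'}{f-a_j}+\frac{a_j'}{f-a_j}$, the Logarithmic Derivative Lemma handles only the first summand; the second is not a logarithmic derivative, and on the part of the circle where $f$ is close to $a_j$ (exactly the part that matters) $\big|\frac{a_j'}{f-a_j}\big|$ is huge, so $m\big(r,\frac{a_j'}{f-a_j}\big)$ can be comparable to $m\big(r,\frac{1}{f-a_j}\big)$ itself --- the quantity you are trying to bound. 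The step is circular; only when $a_j'\equiv 0$ does the offending term vanish. (Your first step, dominating $\sum_j m\big(r,\frac{1}{f-a_j}\big)$ by $m(r,F)$ using $m\big(r,\frac{1}{a_i-a_j}\big)=S(r,f)$, is in fact fine for moving targets; that is not where the difficulty lies.)

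The second and fatal gap is the ``multiplicity cancellation'' that you explicitly grant. It is not a repairable bookkeeping issue about zeros of $a_j'$: at a zero $z_0$ of $f-a_j$ of multiplicity $p\ge 2$ one has $f'(z_0)=a_j'(z_0)$, which is generically \emph{nonzero}, so $N\big(r,\frac{1}{f'}\big)$ records nothing at $z_0$ while the left-hand side of your inequality loses $p-1$ there; summed over all multiple zeros, these losses can amount to a fixed positive proportion of $T(r,f)$, not $S(r,f)$. In fact, the inequality with \emph{truncated} counting functions $\overline{N}\big(r,\frac{1}{f-a_j}\big)$ for small functions is precisely Yamanoi's theorem (Acta Math.\ 192 (2004)), a deep result whose proof is of a completely different nature; the classical circle-of-ideas you follow yields, even after the refinements of Osgood and Steinmetz, only the untruncated version with $N\big(r,\frac{1}{f-a_j}\big)$ and a factor $q-2-\varepsilon$. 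So the proposal reduces the theorem to two steps, one circular and one that is exactly the hard content of the theorem; as a proof it does not go through.
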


\begin{lemma}\label{lemma3.1}\cite[Lemma 2.4]{ZY} Let $f$ be a nonconstant meromorphic function, and let $p$ and $k$ be two positive integers. Then
\begin{align*}
& N_p\Big (r, \frac{1}{f^{(k)}}\Big)\leq T(r,f^{(k)})-T(r,f)+N_{p+k}\Big (r, \frac{1}{f}\Big)+S(r,f),\\
& N_p\Big (r, \frac{1}{f^{(k)}}\Big)\leq k\overline{N}(r,f)+N_{p+k}\Big (r, \frac{1}{f}\Big)+S(r,f).
\end{align*}
Moreover, if $f^{(k)}\not\equiv 0,$ then
\begin{align*}
N\Big (r, \frac{1}{f^{(k)}}\Big)\leq k\overline{N}(r,f)+N\Big (r, \frac{1}{f}\Big)+S(r,f).
\end{align*}
\end{lemma}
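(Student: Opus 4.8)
The plan is to reduce everything to two standard ingredients already available above---the Logarithmic Derivative Lemma and the First Fundamental Theorem---together with one elementary comparison of multiplicities; throughout I assume $f^{(k)}\not\equiv 0$, so that $1/f^{(k)}$ is meaningful.

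First I would record two ``global'' estimates not yet involving truncation. Writing $\frac{1}{f}=\frac{f^{(k)}}{f}\cdot\frac{1}{f^{(k)}}$ and applying the Logarithmic Derivative Lemma to the telescoping product $\frac{f^{(k)}}{f}=\prod_{j=0}^{k-1}\frac{f^{(j+1)}}{f^{(j)}}$ gives
$$m\Big(r,\frac{1}{f}\Big)\le m\Big(r,\frac{1}{f^{(k)}}\Big)+S(r,f).$$
Substituting this into the First Fundamental Theorem (applied to both $f$ and $f^{(k)}$, to convert proximity into counting) yields the untruncated bound
$$N\Big(r,\frac{1}{f^{(k)}}\Big)\le T(r,f^{(k)})-T(r,f)+N\Big(r,\frac{1}{f}\Big)+S(r,f).\qquad(\star)$$
Separately, since a pole of $f$ of order $\mu$ is a pole of $f^{(k)}$ of order $\mu+k$, one has exactly $N(r,f^{(k)})=N(r,f)+k\overline{N}(r,f)$, while $m(r,f^{(k)})\le m(r,f)+S(r,f)$ by the same lemma; adding these gives
$$T(r,f^{(k)})\le T(r,f)+k\overline{N}(r,f)+S(r,f).\qquad(\star\star)$$

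The heart of the matter is to upgrade $(\star)$ to the asserted truncated form, and the key is a pointwise comparison of the excess counting functions $N-N_p$. I would argue locally at each zero $z_0$ of $f$. If $f$ has a zero of order $m>k$ there, then $f^{(k)}$ has a zero of order exactly $m-k$, so the contribution of $z_0$ to $N-N_p$ for $f^{(k)}$ equals $\max(m-k-p,0)=\max(m-(p+k),0)$, which is precisely its contribution to $N-N_{p+k}$ for $f$; if $f$ has a zero of order $m\le k$, its contribution to $N-N_{p+k}$ for $f$ is $0$, hence no larger than the nonnegative contribution on the $f^{(k)}$ side; and any zero of $f^{(k)}$ that is not a zero of $f$ contributes only to the $f^{(k)}$ side. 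Summing over all points,
$$N\Big(r,\frac{1}{f^{(k)}}\Big)-N_p\Big(r,\frac{1}{f^{(k)}}\Big)\ \ge\ N\Big(r,\frac{1}{f}\Big)-N_{p+k}\Big(r,\frac{1}{f}\Big).$$
Rearranging and inserting $(\star)$ yields the first inequality of the lemma; combining that with $(\star\star)$ gives the second; and combining $(\star)$ directly with $(\star\star)$ gives the third.

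I expect the only real obstacle to be the multiplicity bookkeeping in the previous paragraph rather than any hard analysis: one must handle with care the zeros of $f$ of order $m\le k$ (whose images need not be zeros of $f^{(k)}$ at all) and the ``spurious'' zeros of $f^{(k)}$ that do not arise from zeros of $f$, verifying in each case that the inequality points the right way so that no error term appears. Once that local comparison is in hand, the remainder is a routine assembly of the Logarithmic Derivative Lemma and the First Fundamental Theorem.
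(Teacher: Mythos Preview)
Your argument is correct and is essentially the standard proof of this estimate: the untruncated inequality $(\star)$ via the Logarithmic Derivative Lemma and the First Fundamental Theorem, the growth bound $(\star\star)$, and the pointwise multiplicity comparison that upgrades $(\star)$ to its truncated form are exactly the ingredients used in the original source. Note, however, that the present paper does not give its own proof of this lemma at all---it simply quotes it as \cite[Lemma~2.4]{ZY}---so there is no in-paper proof to compare against; your write-up is in line with the proof found in Zhang--Yang and in the standard literature.
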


\begin{lemma}\label{Lemma 3.3}\cite {Y} Let $f(z)$ be a nonconstant meromorphic function and let $a_n(\not\equiv 0),$ $ a_{n-1},\dots,a_0$ be small functions with respect to $f.$ Then
\begin{align*}
T(r,a_nf^n+a_{n-1}f^{n-1}+\cdots +a_0)=nT(r,f)+S(r,f).
\end{align*} \end{lemma}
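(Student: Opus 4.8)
The plan is to establish the two inequalities $T(r,g)\le nT(r,f)+S(r,f)$ and $T(r,g)\ge nT(r,f)+S(r,f)$ for $g:=a_nf^n+a_{n-1}f^{n-1}+\cdots+a_0$, whose conjunction gives the asserted identity; this is the classical Valiron--Mohon'ko estimate specialised to a polynomial in $f$ with small-function coefficients. Since $T=m+N$, I would treat the proximity and counting parts separately, using throughout that $T(r,a_j)=S(r,f)$ for every coefficient, together with the elementary exact identity $T(r,f^n)=nT(r,f)$.

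I would settle the pole-counting part in both directions at once. Away from the zeros and poles of $a_n$, a pole of $f$ of order $p$ forces a pole of $g$ of order exactly $np$, because the leading term $a_nf^n$ strictly dominates all lower-order terms there; conversely $g$ acquires no poles other than those arising from poles of $f$ or of the coefficients. As the points where $a_n$ vanishes or blows up contribute at most $N(r,a_n)+N(r,1/a_n)=S(r,f)$, this yields $N(r,g)=nN(r,f)+S(r,f)$. The upper proximity bound is then routine: from $\log^+|g|\le\sum_{j=0}^n\log^+|a_jf^j|+O(1)\le n\log^+|f|+\sum_j\log^+|a_j|+O(1)$ I integrate to obtain $m(r,g)\le nm(r,f)+\sum_j m(r,a_j)+O(1)=nm(r,f)+S(r,f)$. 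Combined with the pole count, this already gives the upper bound $T(r,g)\le nT(r,f)+S(r,f)$.

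The main obstacle, and the only genuinely delicate step, is the lower proximity bound $m(r,g)\ge nm(r,f)+S(r,f)$: a naive triangle inequality applied to $f^n=a_n^{-1}(g-a_{n-1}f^{n-1}-\cdots-a_0)$ is hopeless, since it ignores cancellation and produces spurious terms of size $(n-1)m(r,f)$. Instead I would run the standard dominant-term argument on the circle $|z|=r$. Setting $\rho(\theta)=\max_{0\le j\le n-1}|a_j/a_n|^{1/(n-j)}$ evaluated at $re^{i\theta}$, I split the circle into the arc $E$ where $|f|$ exceeds a fixed multiple $M\rho(\theta)$ and its complement. On $E$ the leading term dominates, so $|g|\ge\tfrac12|a_n||f|^n$ and hence $n\log^+|f|\le\log^+|g|+\log^+(1/|a_n|)+O(1)$; on the complement one has $\log^+|f|\le\log^+M+\log^+\rho$, whose integral is controlled by $\sum_j m(r,a_j/a_n)=S(r,f)$.

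Integrating over the two arcs and using $m(r,1/a_n)=S(r,f)$ produces $nm(r,f)\le m(r,g)+S(r,f)$. Adding this to the pole identity $N(r,g)=nN(r,f)+S(r,f)$ yields the reverse inequality $T(r,g)\ge nT(r,f)+S(r,f)$, and the two bounds combine to the stated equality. I expect the care in the splitting step to be the crux: the coefficients are only small functions rather than uniformly bounded constants, so the threshold $\rho(\theta)$ must be allowed to vary with $\theta$ and its contribution absorbed into $S(r,f)$, which is exactly where a constant-coefficient argument would need modification.
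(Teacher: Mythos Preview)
The paper does not prove this lemma; it is quoted from Yang \cite{Y} as a known result (the Valiron--Mohon'ko type identity for polynomials in $f$ with small-function coefficients), so there is no in-paper argument to compare against. Your sketch is the standard proof and is correct in outline: the upper bound follows from subadditivity of $\log^+$ together with the pole count, while the lower bound hinges on exactly the dominant-term splitting you describe, with the variable threshold $\rho(\theta)$ correctly absorbed into $S(r,f)$ via $\sum_j m(r,a_j/a_n)$.

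One minor remark on the pole-counting step: you should exclude not only the zeros and poles of $a_n$ but also the poles of the remaining coefficients $a_0,\dots,a_{n-1}$, since a pole of some lower $a_j$ coinciding with a pole of $f$ could in principle disturb the order count; this enlarged exceptional set still contributes only $\sum_j\bigl(N(r,a_j)+N(r,1/a_j)\bigr)=S(r,f)$, so the conclusion $N(r,g)=nN(r,f)+S(r,f)$ is unaffected.
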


\begin{lemma} \label{lemma3.6}\cite[Lemma 3.1]{AP} Let $Q$ be a polynomial of degree $q$ in $\C,$
and let $k$ be  positive integer. Let
$$Q'(z) = b\prod_{i=1}^{l}(z-\zeta_i)^{m_i}$$
 with $b\in\C^*.$ Let $f$ and $g$ be nonconstant meromorphic functions.
Assume that $[Q(f)]^{(k)}=[Q(g)]^{(k)}$. If $q-2l-2k-4>0$ then $Q(f)=Q(g) + c,$ for some constant $c$.
\end{lemma}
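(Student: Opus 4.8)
The plan is to integrate the hypothesis and then rule out a nonconstant difference by a value-distribution argument. Since $[Q(f)]^{(k)}=[Q(g)]^{(k)}$, the function $p:=Q(f)-Q(g)$ satisfies $p^{(k)}\equiv 0$, so $p$ is a polynomial with $\deg p\le k-1$; it therefore suffices to show that $p$ is constant, and I would argue by contradiction, assuming $p'\not\equiv 0$. After treating the rational case directly by comparison of degrees, I may take $f$ (hence $g$) transcendental, so that $p$ and every logarithmic term is $S(r,f)$. Put $F=Q(f)$ and $G=Q(g)$. By Lemma~\ref{Lemma 3.3}, $T(r,F)=q\,T(r,f)+S(r,f)$ and $T(r,G)=q\,T(r,g)+S(r,g)$; since $F-G=p$ is small, these yield $T(r,f)=T(r,g)+S(r)$ and a common error term $S(r):=S(r,f)=S(r,g)$. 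Finally, comparing principal parts in the identity $F-G=p$ at each pole shows that $f$ and $g$ share their poles with identical multiplicities, so $\overline N(r,f)=\overline N(r,g)$ and $N(r,f)=N(r,g)$.

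The mechanism that actually uses the hypothesis is the forced ramification of $F$ at the critical points of $Q$. Differentiating gives
$$F'=Q'(f)\,f'=b\,f'\prod_{i=1}^{l}(f-\zeta_i)^{m_i},$$
so at every $\zeta_i$-point of $f$ the derivative $F'$ vanishes to order at least $m_i$; in particular, when $m_i\ge k$ such a point is a zero of $F^{(k)}=G^{(k)}$ of order at least $m_i-k+1$. Because $F^{(k)}$ and $G^{(k)}$ are \emph{identical}, every such zero is simultaneously a zero of $[Q(g)]^{(k)}$; reading this through $G'=Q'(g)g'$ forces $g$ to meet the critical set $\{\zeta_1,\dots,\zeta_l\}$ at that point, apart from the finitely many \emph{accidental} zeros of $G^{(k)}$ not arising from a critical point. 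This sets up an almost-bijection between the $\zeta$-points of $f$ and those of $g$, whose defect is controlled by Lemma~\ref{lemma3.1} applied to $F$ (bringing in the terms $k\,\overline N(r,f)$ and $N(r,1/F)$) and by the at most $k-1$ zeros of $p$.

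I would then convert this rigidity into a numerical inequality. Applying the Second Fundamental Theorem to $f$ with the $l$ targets $\zeta_1,\dots,\zeta_l$ and $\infty$,
$$(l-1)\,T(r,f)\le \overline N(r,f)+\sum_{i=1}^{l}\overline N\!\left(r,\frac{1}{f-\zeta_i}\right)+S(r),$$
together with the symmetric inequality for $g$, and then feeding in the pairing of $\zeta$-points together with the bounds $\sum_i m_i\,\overline N(r,1/(f-\zeta_i))\le N(r,1/F')$ and the estimate for $N(r,1/F^{(k)})$ from Lemma~\ref{lemma3.1}, the total ramification $\sum_i m_i=q-1$ is squeezed against the available zeros of the common derivative $F^{(k)}=G^{(k)}$. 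Here the $l$ critical values contribute the term $2l$, the order-$k$ differentiation contributes $2k$ (once from the threshold $m_i\ge k$ and once from the derivative lemma), and the two appeals to the Second Fundamental Theorem contribute $4$, so the calculation should collapse to $q\le 2l+2k+4$.

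The main obstacle is precisely this multiplicity bookkeeping: one must separate the indices with $m_i\ge k$ from those with $m_i<k$, track exactly how much each $\zeta_i$-point contributes to the zeros of $F^{(k)}=G^{(k)}$ as opposed to the ramification of $F'$, and absorb both the accidental zeros and the zeros of $p$ into the error term, all without losing the sharp constant. Once $q\le 2l+2k+4$ is reached it contradicts the hypothesis $q-2l-2k-4>0$ (the $S(r)$ terms being negligible outside a set of finite measure), forcing $p'\equiv 0$; hence $p\equiv c$ and $Q(f)=Q(g)+c$.
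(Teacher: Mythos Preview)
The paper does not give its own proof of this lemma; it is quoted verbatim from \cite[Lemma~3.1]{AP} and used as a black box. So there is no in-paper argument to compare against, and I comment only on the internal soundness of your sketch.

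Your opening is correct and is the natural first step: from $[Q(f)]^{(k)}=[Q(g)]^{(k)}$ the difference $p:=Q(f)-Q(g)$ satisfies $p^{(k)}\equiv 0$; since a pole of $p$ would survive differentiation, $p$ is entire, hence a polynomial of degree at most $k-1$. It follows at once that $f$ and $g$ share their poles with equal multiplicities and, in the transcendental case, that $T(r,f)=T(r,g)+S(r)$. All of this is fine.

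The genuine gap is in the central mechanism. You assert that a $\zeta_i$-point of $f$ with $m_i\ge k$, being a zero of $F^{(k)}=G^{(k)}$, ``forces $g$ to meet the critical set $\{\zeta_1,\dots,\zeta_l\}$ at that point,'' up to ``finitely many accidental zeros of $G^{(k)}$.'' Both claims are incorrect. The accidental zeros of $G^{(k)}$ are in general not finite in number; more importantly, the implication runs the \emph{opposite} way. Differentiating $Q(f)-Q(g)=p$ once gives
\[
Q'(f)\,f'-Q'(g)\,g'=p',
\]
so at any $\zeta_i$-point $z_0$ of $f$ one has $Q'\bigl(g(z_0)\bigr)\,g'(z_0)=-p'(z_0)$. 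Away from the at most $k-2$ zeros of $p'$ this is nonzero, which forces $g(z_0)\ne\zeta_j$ for every $j$ (and $g'(z_0)\ne 0$). Thus the $\zeta$-points of $f$ and those of $g$ are \emph{essentially disjoint}, not in ``almost-bijection.'' The counting scheme you outline---pairing the $\zeta$-points of $f$ with those of $g$ and feeding this into the Second Main Theorem---therefore does not get started.

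The disjointness you actually obtain is the right lever, but exploiting it needs a different bookkeeping: one must bound the \emph{combined} supply of $\zeta$-points of $f$ and of $g$ against a single Nevanlinna quantity (for instance through $N\bigl(r,1/[Q(f)]'\bigr)$ together with $T(r,[Q(f)]')\le qT(r,f)+\overline N(r,f)+S(r)$), rather than matching them off. Until such an estimate is carried out, the derivation of the sharp bound $q\le 2l+2k+4$ is unjustified. The rational case, dismissed as ``comparison of degrees,'' also requires a self-contained argument.
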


A meromorphic function $\alpha$ on $\C$ is called {\it a small function with respect to $f$} if it satisfies $T(r,\alpha)=S(r,f).$ We say that two meromorphic functions $f$ and $g$ {\it share a function $\alpha$ counting multiplicities} if $f-\alpha$ and $g-\alpha$ admit the same zeros with the same multiplicities, and we say that $f$ and $g$ {\it share $\alpha$ ignoring multiplicities} if we do not consider the multiplicities. 

\begin{lemma} \label{lemma3.4}\cite[Lemma 3.4]{AP}
Let $f,g$ be nonconstant meromorphic functions and  $\alpha (\not\equiv 0,\infty)$ be a small function with respect to $f$  and $g$. If $$[Q(f)]^{(k)}[Q(g)]^{(k)}=\alpha ^2,$$
then $h\le 2$ or $h=3$ and either $q=2m_1-2k+2,$ $q=\frac{3m_1-2k+3}{2},$ or $ q=3m_i-2k+3,\,$ for $i=1,2,$ and $3.$ 
If  we further assume that  $f$ and $g$ share $\infty$ ignoring
multiplicities,  then also $h = 1.$
\end{lemma}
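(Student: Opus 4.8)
The plan is to translate the functional identity $[Q(f)]^{(k)}[Q(g)]^{(k)}=\alpha^{2}$ into a rigid correspondence between the zeros of one side and the poles of the other, and then to feed the resulting multiplicity information into the Second Fundamental Theorem. Write $F=[Q(f)]^{(k)}$ and $G=[Q(g)]^{(k)}$, so that $FG=\alpha^{2}$. Since $\alpha$ is a nonzero small function, its zeros and poles contribute only $S(r,f)=S(r,g)$ to every counting function, so at every point outside this negligible set the local orders satisfy $\operatorname{ord}(F)+\operatorname{ord}(G)=0$. First I would record the local behaviour of $F$: at a $\zeta_i$-point of $f$ with $i\le h$ (so $m_i\ge k$) and multiplicity $s$, the function $Q(f)-Q(\zeta_i)$ vanishes to order $s(m_i+1)$, hence $F$ has a zero of order $s(m_i+1)-k\ge 1$; at a pole of $f$ of order $t$ the function $F$ has a pole of order $qt+k$. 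The identity $FG=\alpha^{2}$ then forces each $\zeta_i$-point of $f$ $(i\le h)$ to be a pole of $g$, and each pole of $f$ to be a $\zeta_j$-point of $g$ (or, a priori, a high-order auxiliary zero of $G$), together with the Diophantine matching $s(m_i+1)=qt+2k$ at paired points; the symmetric statements hold with $f$ and $g$ interchanged.

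The second step is a counting estimate. Because distinct $\zeta_i$-points of $f$ are distinct poles of $g$, I obtain $\sum_{i=1}^{h}\overline N\big(r,\tfrac{1}{f-\zeta_i}\big)\le \overline N(r,g)+S(r)$ and its symmetric counterpart. The matching relation $s(m_i+1)=qt+2k\ge q+2k$ forces every paired point to carry multiplicity at least $(q+2k)/(m_i+1)$, so each reduced counting function is small compared with the corresponding characteristic function; summing over $i$ gives
\[
\overline N(r,f)\le \frac{M}{q+2k}\,T(r,g)+S(r),\qquad \overline N(r,g)\le \frac{M}{q+2k}\,T(r,f)+S(r),
\]
where $M=\sum_{i=1}^{h}(m_i+1)$. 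Here I must also check that poles of $f$ corresponding to zeros of $G$ that are \emph{not} $\zeta_j$-points of $g$ are negligible: such zeros have order $\ge q+k$, hence arise from points where $g$ attains a value with very large multiplicity, and their reduced count is $O\big(T(r,g)/(q+2k)\big)$ by the ramification form of the Second Fundamental Theorem. I expect this bookkeeping, rather than the final numerics, to be the main technical obstacle.

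Finally I would apply the Second Fundamental Theorem to $f$ with the $h+1$ targets $\zeta_1,\dots,\zeta_h,\infty$,
\[
(h-1)\,T(r,f)\le \sum_{i=1}^{h}\overline N\Big(r,\tfrac{1}{f-\zeta_i}\Big)+\overline N(r,f)+S(r)\le \overline N(r,g)+\overline N(r,f)+S(r),
\]
together with the symmetric inequality for $g$. Adding the two and cancelling $T(r,f)+T(r,g)$ yields $h-1\le \tfrac{2M}{q+2k}$. Since $M=h+\sum_{i\le h}m_i\le h+(q-1)$, this already forces $h\le 3$. For $h=3$ the entire chain of inequalities must be essentially sharp, which can occur only when every paired point realises the minimal multiplicities, i.e.\ when $s(m_i+1)=qt+2k$ admits a solution with $(s,t)\in\{(2,1),(3,1),(3,2)\}$; these three possibilities are precisely $q=2m_1-2k+2$, $q=3m_i-2k+3$, and $q=\tfrac{3m_1-2k+3}{2}$. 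This produces the stated dichotomy: either $h\le 2$, or $h=3$ under one of the listed resonance conditions.

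For the last assertion, suppose in addition that $f$ and $g$ share $\infty$ ignoring multiplicity. A common pole $z_0$ would make $FG$ have a pole of order at least $2q+2k>0$ at $z_0$, which is impossible for $\alpha^{2}$ away from the poles of $\alpha$. Hence $\overline N(r,f)=\overline N(r,g)=S(r)$, and by the correspondence of the first step the $\zeta_i$-point counting functions are then themselves $S(r)$. The inequality above degenerates to $(h-1)\,T(r,f)\le S(r)$, which forces $h\le 1$ and therefore $h=1$.
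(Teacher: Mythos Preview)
The paper does not actually prove this lemma: it is quoted verbatim as \cite[Lemma~3.4]{AP} in Section~2 (``Results Needed from Nevanlinna's Theory'') and is used as a black box in the proof of Theorem~\ref{th2}. There is therefore no ``paper's own proof'' to compare your attempt against.

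That said, your outline is broadly the right strategy and matches the approach of the cited source: translate $FG=\alpha^{2}$ into a zero/pole correspondence, record the local orders $s(m_i+1)-k$ and $qt+k$, feed the resulting multiplicity constraints into the Second Fundamental Theorem with targets $\zeta_1,\dots,\zeta_h,\infty$, and read off $h\le 3$. The argument for the final clause (shared $\infty$ forces $\overline N(r,f)=\overline N(r,g)=S(r)$, hence $h=1$) is clean and correct.

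Two places in your sketch remain genuinely incomplete. First, the step you yourself flag---controlling those zeros of $G$ at poles of $f$ that are \emph{not} $\zeta_j$-points of $g$---needs a real argument: zeros of $[Q(g)]^{(k)}$ can occur at arbitrary points, not just at preimages of the $\zeta_j$, and you must show that the non-$\zeta_j$ contribution is absorbed by $S(r)$ or by a term compatible with your final inequality. Second, the $h=3$ analysis is too quick. Saying ``the entire chain of inequalities must be essentially sharp'' and listing $(s,t)\in\{(2,1),(3,1),(3,2)\}$ does not by itself force the \emph{global} Diophantine conclusions $q=2m_1-2k+2$, $q=\tfrac{3m_1-2k+3}{2}$, $q=3m_i-2k+3$; one must argue that unless one of these equalities holds, some $\zeta_i$-point count is strictly smaller than the extremal bound, which then pushes the Second Fundamental Theorem inequality below $(h-1)T(r,\cdot)$ and yields a contradiction. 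You also need to explain why $m_1$ (rather than an arbitrary $m_i$) appears in the first two relations. These are exactly the details that the cited lemma in \cite{AP} works out, so for the purposes of the present paper citing that lemma is the intended route.
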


\begin{lemma}\label{lemmaSmall}\cite[Lemma 3.2] {AP}
Let $f$ and $g$ be nonconstant meromorphic functions, and  let $\alpha$
be a small function with respect to $f$.
If  $q>\upsilon(k+1)+\sum_{i=\upsilon+1}^lm_i+6$ and $[Q(f)]^{(k)}$ and $[Q(g)]^{(k)}$ share $\alpha$ ignoring multiplicities, then $T(r,f)=O(T(r,g)),$ $T(r,g)=O(T(r,f)),$ and $\alpha$ is a small function with  respect to $g.$
\end{lemma}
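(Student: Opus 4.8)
The plan is to set $F=Q(f)$ and $G=Q(g)$, so that $F^{(k)}=[Q(f)]^{(k)}$ and $G^{(k)}=[Q(g)]^{(k)}$ share $\alpha$ ignoring multiplicity. Since $\deg Q=q$, Lemma~\ref{Lemma 3.3} gives $T(r,F)=qT(r,f)+S(r,f)$, and the elementary estimate $m(r,F^{(k)})\le m(r,F)+S(r,f)$ from the Logarithmic Derivative Lemma together with $N(r,F^{(k)})=N(r,F)+k\overline{N}(r,F)$ yields $T(r,F^{(k)})\le (q+k)T(r,f)+S(r,f)$; in particular $\alpha$, being small with respect to $f$, is small with respect to $F$ and to $F^{(k)}$, and since $T(r,F^{(k)})=O(T(r,f))$ the symbol $S(r,F^{(k)})$ may be absorbed into $S(r,f)$. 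The statement is not symmetric in $f$ and $g$ at the outset, because $\alpha$ is assumed small only with respect to $f$. I would therefore first prove $T(r,f)=O(T(r,g))$; once this is known, $T(r,\alpha)=S(r,f)=o(T(r,f))=o(T(r,g))$, so $\alpha$ is automatically small with respect to $g$, which is the last assertion, and then the reverse bound $T(r,g)=O(T(r,f))$ follows by repeating the argument with $f$ and $g$ interchanged.

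The heart of the matter is a lower bound for $\overline{N}(r,1/(F^{(k)}-\alpha))$ in terms of $T(r,f)$. Applying the Second Fundamental Theorem to $F^{(k)}$ with the three small targets $0,\alpha,\infty$ gives
$$T(r,F^{(k)})\le \overline{N}(r,F^{(k)})+\overline{N}\Big(r,\tfrac{1}{F^{(k)}}\Big)+\overline{N}\Big(r,\tfrac{1}{F^{(k)}-\alpha}\Big)+S(r,f).$$
To remove the term $T(r,F^{(k)})$ I would feed in the first inequality of Lemma~\ref{lemma3.1} with $p=1$, namely $\overline{N}(r,1/F^{(k)})\le T(r,F^{(k)})-T(r,F)+N_{k+1}(r,1/F)+S(r,f)$; after cancellation this leaves
$$\overline{N}\Big(r,\tfrac{1}{F^{(k)}-\alpha}\Big)\ge T(r,F)-\overline{N}(r,F^{(k)})-N_{k+1}\Big(r,\tfrac{1}{Q(f)}\Big)-S(r,f).$$
Since $T(r,F)=qT(r,f)+S(r,f)$ and $\overline{N}(r,F^{(k)})=\overline{N}(r,f)\le T(r,f)$, it remains to bound the zeros of $Q(f)$, and this is the step I expect to be the main obstacle. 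Writing $Q=c_0\prod_j(z-w_j)^{n_j}$, a preimage of $w_j$ of multiplicity $t$ is a zero of $Q(f)$ of multiplicity $n_jt$, whose contribution to $N_{k+1}$ is $\min(n_jt,k+1)\le \min(n_j,k+1)\,t$; summing over preimages and over $j$ gives $N_{k+1}(r,1/Q(f))\le \big[\sum_j\min(n_j,k+1)\big]T(r,f)+S(r,f)$. The delicate point is to convert the root data $n_j$ of $Q$ into the ramification data $m_i$ of $Q'$: since the multiple roots of $Q$ are exactly the roots $\zeta_i$ of $Q'$ with $m_i=n_j-1$, while $\deg Q-\deg Q'=1$, one checks that $\sum_j\min(n_j,k+1)\le \upsilon(k+1)+\sum_{i=\upsilon+1}^{l}m_i+1$. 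Combining these estimates produces
$$\overline{N}\Big(r,\tfrac{1}{F^{(k)}-\alpha}\Big)\ge\Big(q-\upsilon(k+1)-\sum_{i=\upsilon+1}^{l}m_i-2\Big)T(r,f)-S(r,f),$$
whose coefficient is strictly positive by the hypothesis $q>\upsilon(k+1)+\sum_{i=\upsilon+1}^{l}m_i+6$.

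Finally I would exploit the shared value. Because $F^{(k)}$ and $G^{(k)}$ share $\alpha$ ignoring multiplicity, $\overline{N}(r,1/(F^{(k)}-\alpha))=\overline{N}(r,1/(G^{(k)}-\alpha))$, and by the First Fundamental Theorem the latter is at most $T(r,G^{(k)})+T(r,\alpha)+O(1)\le (q+k)T(r,g)+S(r,g)+S(r,f)$. Comparing with the lower bound of the previous paragraph gives
$$\Big(q-\upsilon(k+1)-\sum_{i=\upsilon+1}^{l}m_i-2\Big)T(r,f)\le (q+k)T(r,g)+S(r,g)+S(r,f),$$
whence $T(r,f)=O(T(r,g))$ after absorbing $S(r,f)$ on the left and noting $S(r,g)=o(T(r,g))$. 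As explained above this immediately yields that $\alpha$ is small with respect to $g$; with that in hand, the identical computation applied to $G^{(k)}$ (now legitimately treating $\alpha$ as small with respect to $g$) together with the same sharing identity gives $T(r,g)=O(T(r,f))$, completing all three conclusions.
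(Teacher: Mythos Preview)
The paper does not itself prove this lemma; it is quoted verbatim from \cite[Lemma 3.2]{AP}, so there is no in-paper proof to compare with. I can, however, assess your argument directly, and there is a genuine gap at precisely the step you call ``the delicate point.'' The claimed inequality
\[
\sum_j \min(n_j,k+1)\;\le\;\upsilon(k+1)+\sum_{i=\upsilon+1}^{l}m_i+1
\]
is false, even under the hypothesis of the lemma. Take $k=1$ and $Q(z)=z^{11}/11+c$ with $c\ne 0$. Then $Q'(z)=z^{10}$, so $l=\upsilon=1$, $m_1=10$, and the hypothesis $q>\upsilon(k+1)+\sum_{i>\upsilon}m_i+6$ reads $11>8$, which holds. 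But $Q$ has eleven \emph{simple} roots, so $\sum_j\min(n_j,2)=11$, while your bound gives $2+0+1=3$. The flaw is that multiple roots of $Q$ inject into the roots of $Q'$, but not conversely: a critical point of $Q$ of high multiplicity need not be a zero of $Q$, and then $Q$ may have many simple zeros bearing no relation to the $\zeta_i$. In such cases $N_{k+1}(r,1/Q(f))$ is genuinely of order $qT(r,f)$, so your lower bound for $\overline{N}\big(r,1/(F^{(k)}-\alpha)\big)$ collapses and the argument gives nothing.

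This cannot be fixed by adjusting the constant, because the zeros of $Q$ simply do not encode the data $\upsilon,\,m_i$. The remedy is to work with $F_1'=Q'(f)f'$ instead of $F_1=Q(f)$: write $F^{(k)}=(F_1')^{(k-1)}$ and apply the first inequality of Lemma~\ref{lemma3.1} with $p=1$ to get $\overline{N}\big(r,1/F^{(k)}\big)\le T(r,F^{(k)})-T(r,F_1')+N_k\big(r,1/F_1'\big)+S(r,f)$. Now the factorisation $Q'(z)=b\prod_i(z-\zeta_i)^{m_i}$ gives directly
\[
N_k\Big(r,\frac{1}{F_1'}\Big)\le \upsilon k\,T(r,f)+\sum_{i>\upsilon}m_i\,T(r,f)+N\Big(r,\frac{1}{f'}\Big)+S(r,f),
\]
while a lower bound $T(r,F_1')\ge (q-1)T(r,f)+N(r,1/f')-T(r,f)+S(r,f)$ follows from the identity $Q(z)-R(z)=a(z-\beta)Q'(z)$ and the Logarithmic Derivative Lemma, exactly as in the paper's proof of Theorem~\ref{th1}. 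Feeding these into the Second Main Theorem applied to $F^{(k)}$ yields a positive lower bound for $\overline{N}\big(r,1/(F^{(k)}-\alpha)\big)$ under the stated hypothesis on $q$, after which your final paragraph goes through unchanged.
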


\section{Proof of Theorem~\ref{thm5}}
Before giving a proof, we will recall some known results.

\begin{lemma}\label{Ligu}\cite[Lemma 2]{LG}
Let $k, m\geq k+2$ be positive integers, $b\ne 0$ be a finite complex number and $f$ be a nonconstant rational meromorphic function, then $(f^m)^{(k)}-b$ has at least two distinct zeros.
\end{lemma}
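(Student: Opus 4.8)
The plan is to argue by contradiction: assume $(f^m)^{(k)}-b$ has at most one distinct zero and derive a contradiction from the hypothesis $m\ge k+2$. Write $F=f^m$ and split according to whether $f$ is a polynomial. If $f$ is a polynomial of degree $n\ge 1$, then $F^{(k)}-b$ is a polynomial of degree $mn-k\ge m-k\ge 2$, so a single distinct zero forces $F^{(k)}-b=A(z-z_0)^{mn-k}$ for a constant $A\ne 0$. Differentiating, $F^{(k+1)}=A(mn-k)(z-z_0)^{mn-k-1}$ vanishes only at $z_0$; but $F^{(k+1)}=(f^m)^{(k+1)}$ also vanishes at every zero of $f$ to order $m\mu-(k+1)\ge 1$, so all zeros of $f$ must coincide at $z_0$, i.e.\ $f=c(z-z_0)^n$. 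Then $F^{(k)}-b$ is a nonzero constant multiple of $(z-z_0)^{mn-k}$ minus $b$, which for $b\ne 0$ and $mn-k\ge 2$ has $mn-k$ \emph{distinct} roots, a contradiction.

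For the remaining case $f$ is rational but not a polynomial, so it has at least one pole; moreover $(f^m)^{(k)}\not\equiv b$ (otherwise $f^m$, hence $f$, would be a polynomial). Write $f=c\,\frac{\prod_{i=1}^{s}(z-a_i)^{\mu_i}}{\prod_{j=1}^{t}(z-p_j)^{\sigma_j}}$ with $t\ge 1$, and set $M=\sum\mu_i$, $N=\sum\sigma_j$. Since $m\ge k+2$, each zero $a_i$ of $f$ is a zero of $F^{(k)}$ of order $m\mu_i-k\ge 2$ and of $F^{(k+1)}$ of order $m\mu_i-(k+1)\ge 1$, while each pole $p_j$ is a pole of $F^{(k)}$ of order $m\sigma_j+k$. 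The key tool is the valence identity for rational maps: the total number of zeros equals the total number of poles on $\C\cup\{\infty\}$, counted with multiplicity. Applying this to $F^{(k)}-b$ (whose finite poles are exactly those of $F^{(k)}$), and reading off the order of the zero or pole at infinity from the leading behaviour $f\sim cz^{M-N}$, lets me compute the exact number $Z\ (\ge 1)$ of finite zeros of $F^{(k)}-b$. Under the standing assumption this single zero $z_0$ then has multiplicity $Z$, so $F^{(k+1)}=\big(F^{(k)}-b\big)'$ has a zero of order $Z-1$ at $z_0$; since $z_0$ is neither a zero nor a pole of $f$ (at $a_i$ one has $F^{(k)}-b=-b\ne 0$), this zero is disjoint from the zeros of $F^{(k+1)}$ at the $a_i$.

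Counting the finite zeros of $F^{(k+1)}$ in two ways yields the contradiction. On one hand they number at least $(Z-1)+\big(mM-(k+1)s\big)$, the second term coming from the $a_i$; on the other hand the valence identity gives their exact total $T_1$. Comparing the two and simplifying, the hypothesis $m\ge k+2$ together with the trivial bounds $M\ge s$, $N\ge t$, $N\ge 1$ forces a strict inequality in the wrong direction (for instance, when $M\le N$ one is reduced to $mN>(k+1)s+t-k$, which holds since $mN\ge(k+2)N\ge(k+1)s+t$). \textbf{The main obstacle} is the bookkeeping at infinity: the counts $Z$ and $T_1$ depend on the sign of $M-N$, and the borderline case $M=N$ (where $f$ tends to a finite nonzero limit and differentiation produces a zero of $F^{(k+1)}$ at infinity of order strictly larger than the generic value) must be treated separately to keep every inequality sharp. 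Once the three cases $M>N$, $M=N$, $M<N$ are each checked, the contradiction is complete and $(f^m)^{(k)}-b$ must have at least two distinct zeros.
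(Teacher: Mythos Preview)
The paper does not prove this lemma; it is quoted as \cite[Lemma~2]{LG} and used as a black box in the proof of Theorem~\ref{thm5}. There is therefore no proof in the present paper to compare your attempt against.

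For what it is worth, your outline follows the same strategy as the original argument of Li and Gu: split into the polynomial and strictly rational cases, and in the latter compare a lower bound for the number of finite zeros of $F^{(k+1)}$ (coming from the single putative $b$-point of $F^{(k)}$ together with the zeros $a_i$ of $f$) with the exact count obtained from the degree of $F^{(k+1)}$ as a map $\mathbb{P}^1\to\mathbb{P}^1$. Your polynomial case is complete; in the rational case the inequalities you indicate do close in all three sub-cases $M>N$, $M=N$, $M<N$ (in the borderline case $M=N$ the extra order $d\ge 1$ of vanishing of $f-c$ at infinity only strengthens the inequality, so nothing is lost). One small point worth making explicit: the computation $Z\ge 1$ in every sub-case also disposes of the possibility that $(f^m)^{(k)}-b$ has \emph{no} zeros, so the contradiction argument genuinely covers ``at most one distinct zero''.
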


\begin{lemma}\label{transcendental}\cite[Theorem 10]{FW}
Let $f$ be a transcendental meromorphic function on $\C$ having at most
finitely many simple zeros, then $f ^{(k)}$ takes on every non-zero complex value infinitely often.
\end{lemma}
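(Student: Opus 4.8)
The plan is to argue by contradiction and reduce, via Nevanlinna's second fundamental theorem, to a single core estimate; this makes transparent where the multiple-zero hypothesis helps and isolates the genuine difficulty, which is the treatment of the poles. Suppose there is a value $c\ne 0$ taken by $f^{(k)}$ only finitely often, so that $\overline{N}\big(r,\frac{1}{f^{(k)}-c}\big)=S(r,f)$. The goal is to show this is incompatible with $f$ being transcendental with only finitely many simple zeros.

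First I would apply the second fundamental theorem to $f^{(k)}$ with the three targets $0,c,\infty$, obtaining
$$T(r,f^{(k)})\le \overline{N}(r,f^{(k)})+\overline{N}\Big(r,\tfrac{1}{f^{(k)}}\Big)+\overline{N}\Big(r,\tfrac{1}{f^{(k)}-c}\Big)+S(r,f).$$
Here $\overline{N}(r,f^{(k)})=\overline{N}(r,f)$, and the last term is $S(r,f)$ by assumption. To control the zeros of $f^{(k)}$ I would invoke the first inequality of Lemma~\ref{lemma3.1} with $p=1$, namely $\overline{N}\big(r,\frac{1}{f^{(k)}}\big)\le T(r,f^{(k)})-T(r,f)+N_{k+1}\big(r,\frac{1}{f}\big)+S(r,f)$. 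Substituting and cancelling $T(r,f^{(k)})$ from both sides leaves the core estimate
$$T(r,f)\le \overline{N}(r,f)+N_{k+1}\Big(r,\tfrac{1}{f}\Big)+S(r,f).$$

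At this stage the hypothesis enters: since $f$ has only finitely many simple zeros, the reduced counting function of its simple zeros is $S(r,f)$, so every relevant zero of $f$ is multiple, and a zero of $f$ can contribute to $N_{k+1}(r,1/f)$ and to a zero of $f^{(k)}$ only through its higher multiplicity (in particular it becomes a zero of $f^{(k)}$ only when its multiplicity exceeds $k$). The idea is then to feed back the ramification information discarded in the second fundamental theorem --- the zeros of $f^{(k+1)}$ forced at the multiple zeros of $f$, and the fact that each pole of $f$ becomes a pole of $f^{(k)}$ of order at least $k+1$ --- to show that the estimate can hold only if $f^{(k)}$ is of a very special (essentially non-transcendental) form, yielding a Hayman/Picard-type contradiction with the transcendence of $f$.

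The main obstacle, and the reason the statement is genuinely delicate, is the pole term $\overline{N}(r,f)$: the hypothesis restricts only the simple zeros, so $f$ may carry arbitrarily many poles, and the crude second-main-theorem bound above cannot be closed when poles are abundant. Overcoming this is precisely the content of the Frank--Weissenborn argument of \cite{FW}: one reduces to a Picard-type theorem asserting that a transcendental meromorphic function all of whose zeros are multiple (up to finitely many) cannot have its $k$-th derivative omit, or assume only finitely often, a nonzero value, and one handles the poles by a local Wiman--Valiron analysis together with the high ramification of $f^{(k)}$ over $\infty$. I expect this pole analysis to be the hardest step; the Nevanlinna reduction leading to the core estimate is routine, whereas the removal of the unrestricted poles is where the depth of \cite{FW} lies.
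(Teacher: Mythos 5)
There is a genuine gap, and you have in effect flagged it yourself. First, a point of comparison: the paper contains no proof of this lemma at all --- it is imported verbatim as Theorem 10 of \cite{FW}, and is used later only as a black box. So the question is whether your argument stands on its own, and it does not. Your Nevanlinna reduction is correct as far as it goes: the second fundamental theorem for $f^{(k)}$ at $0,c,\infty$, combined with the first inequality of Lemma~\ref{lemma3.1} with $p=1$, does yield
\begin{equation*}
T(r,f)\le \overline{N}(r,f)+N_{k+1}\Big(r,\frac{1}{f}\Big)+S(r,f).
\end{equation*}
But this ``core estimate'' carries no contradiction whatsoever under the hypotheses. With all zeros of $f$ multiple (up to finitely many, which contribute only $S(r,f)$), the best one can say is $N_{k+1}(r,\frac1f)\le\frac{k+1}{2}N(r,\frac1f)\le\frac{k+1}{2}T(r,f)+O(1)$, since a zero of multiplicity $m\ge 2$ contributes $\min(m,k+1)\le\frac{k+1}{2}m$. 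As $k\ge 1$, the right-hand side can dominate $T(r,f)$ even when $f$ is entire ($\overline{N}(r,f)\equiv 0$): for $k=1$ the estimate reads $T(r,f)\le N_2(r,\frac1f)+S(r,f)$, which is satisfied by very ordinary functions. So truncated counting plus the crude pole term cannot distinguish the situation of the lemma from perfectly generic behavior, and no contradiction is available at this point. Everything after your core estimate --- ``feed back the ramification information,'' ``local Wiman--Valiron analysis,'' ``high ramification over $\infty$'' --- is a list of names for the missing work, explicitly deferred to the cited reference. Since the entire content of the lemma is exactly the part you defer, the proposal is a restatement of the difficulty rather than a proof.

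Two smaller corrections. The reference \cite{FW} is Fang--Wang (2013), not Frank--Weissenborn, so the attribution of the pole-handling technique is off; and the route actually taken in that circle of results does not go through Wiman--Valiron, but through rescaling/normal-family arguments of Pang--Zalcman type combined with a classification for functions of finite order with highly multiple zeros (compare Lemma~\ref{constant}, quoted in this paper from \cite{WF}), together with refined estimates in the spirit of Hayman's inequality in which the simple-zero hypothesis enters more deeply than through the truncation $N_{k+1}$. To turn your sketch into a proof you would need to supply that second, essential stage; as written, your argument proves only the (correct but empty) reduction, and then cites for the conclusion the very theorem being proved.
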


\begin{lemma}\label{constant}\cite[Lemma 10]{WF}
Let $f$ be a meromorphic function of finite order in $\C,$ $k$ be a positive integer. If the zeros of $f$ are of multiplicities at least $k+2$ and $f^{(k)}\ne 1,$ then $f$ is a constant.
\end{lemma}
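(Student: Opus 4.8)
The plan is to argue by contradiction: assume $f$ is nonconstant and derive a contradiction, splitting the analysis into the transcendental and the rational cases. The key preliminary observation is that, since every zero of $f$ has multiplicity at least $k+2\ge 2$, the function $f$ has \emph{no} simple zeros whatsoever; in particular it has at most finitely many simple zeros. If $f$ is transcendental, the conclusion is then immediate from Lemma~\ref{transcendental}: that lemma forces $f^{(k)}$ to assume every nonzero complex value, in particular the value $1$, infinitely often, which directly contradicts the hypothesis $f^{(k)}\ne 1$. (Note that the finite-order assumption is not even needed in this subcase once Lemma~\ref{transcendental} is available; it is presumably there to run a direct Nevanlinna estimate instead.) Hence $f$ must be rational.

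If $f$ is a nonconstant polynomial, then since it has a zero of multiplicity at least $k+2$ we get $\deg f\ge k+2$, so $f^{(k)}$ is a nonconstant polynomial and therefore attains the value $1$, again contradicting $f^{(k)}\ne 1$. So I may assume $f$ is rational with at least one finite pole. Since $f^{(k)}-1$ then has no zeros in $\C$ and is rational, I would write $f^{(k)}-1=c/P$ with $c\in\C^*$ and $P$ a polynomial, and read off the local data from this representation: a pole of $f$ of order $q$ produces a pole of $f^{(k)}$, hence a zero of $P$, of order $q+k\ge k+1$; the normalization $f^{(k)}\to 1$ at $\infty$ forces $\deg(\text{numerator of }f)=\deg(\text{denominator})+k$; and each zero $a$ of $f$ of order $p\ge k+2$ is a zero of $f^{(k)}$ of order $p-k\ge 2$, so that $P(a)=-c$ and $P'(a)=0$, i.e. $a$ is a critical point of $P$ with critical value $-c$.

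The concluding step, and the main obstacle, is to turn these multiplicity constraints into an actual contradiction. I expect the purely numerical bookkeeping — comparing $\deg P$, the orders $\nu_j\ge k+1$ of the roots of $P$, and the multiplicity excess $\sum_i(p_i-k-1)$ that is controlled by $\deg\gcd(P+c,P')\le \deg P-\#\{\text{distinct roots of }P+c\}$, equivalently the Riemann--Hurwitz count for the degree-$\deg P$ rational map $f^{(k)}$ over the values $0,1,\infty$ — to yield only the weak inequality (number of distinct zeros of $f$) $\le$ (number of distinct poles of $f$) $-\,1$, which is not by itself contradictory. The genuine contradiction must come from a finer local-to-global argument: the demand that every high-order zero of $f$ be a common high-order root of $P+c$ and of $P'$, combined with the exact antiderivative structure relating $P$ to $f$, over-determines $P$ and ultimately forces a zero of $f$ to coincide with a pole of $f$, which is impossible.

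Carrying out this degeneration in full generality is where the real work lies; in the smallest cases it reduces to a short resultant computation (for instance, a single triple zero with a double pole forces the factor $(\sigma-2a)^3$ to vanish, collapsing the poles onto the zero), and the plan would be to organize the general case along the same lines, paralleling the rational-function argument underlying Lemma~\ref{Ligu}. I therefore anticipate that the transcendental and polynomial parts are routine given Lemma~\ref{transcendental}, while essentially all the difficulty is concentrated in showing that the rational case with poles cannot occur.
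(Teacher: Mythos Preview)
The paper does not prove this lemma at all; it is quoted verbatim from Wang--Fang \cite{WF} and used as a black box in the proof of Theorem~\ref{thm5}. There is therefore no ``paper's own proof'' to compare against.

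That said, your proposal is not a complete proof either, and you essentially say so yourself. The transcendental case is indeed immediate once you invoke Lemma~\ref{transcendental} (note that you are using a 2013 theorem to recover a 1998 lemma, so this is certainly not the original route, and it explains why the finite-order hypothesis looks superfluous in your write-up), and the polynomial case is trivial. But for a rational $f$ with at least one pole you concede that the multiplicity bookkeeping you set up --- comparing $\deg P$, the pole orders $q_j+k$, and the critical-value constraint $P(a_i)=-c$, $P'(a_i)=0$ at each zero $a_i$ of $f$ --- yields only the non-contradictory inequality $s\le t-1$. You then assert that a ``finer local-to-global argument'' involving the antiderivative structure ``ultimately forces a zero of $f$ to coincide with a pole of $f$'', but you do not carry this out; the single worked example (one triple zero, one double pole) is a resultant computation that does not come with any mechanism for induction or generalization. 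This rational-with-poles case is precisely the substantive content of the Wang--Fang lemma, and your proposal leaves it as an unexecuted plan rather than an argument.
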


\begin{lemma}\label{Zalcman}\cite{Za} 
Let $\mathcal{F}$ be a family of meromorphic functions in the unit disc $\Delta$ satisfying all zeros of functions in $\mathcal{F}$ have multiplicity greater than or equal to $\ell$ and all poles of functions in $\mathcal{F}$ have multiplicity greater than or equal to $j.$ Let $\alpha$ be a real number satisfying $-\ell<\alpha<j.$ Then if $\mathcal{F}$ is not normal at a point $z_0\in\Delta$ if and only if there exist
 \begin{enumerate}
\item[(i)] points $z_n\in\Delta,$ $z_n\rightarrow z_0;$
\item[(ii)] positive numbers $\rho_n,$ $\rho_n\rightarrow 0;$ and 
\item[(iii)] functions $f_n\in\mathcal{F}$
\end{enumerate}
such that $g_n(\zeta)=\rho_n^{\alpha}f_n(z_n+\rho_n\zeta)\rightarrow g(\zeta)$ spherically uniformly on compact subsets of $\C,$ where $g$ is a nonconstant meromorphic function and $g^{\#}(\zeta)\leq g^{\#}(0)=1.$  Moreover, the order of $g$ is not greater than $2.$ Here, as usual,$$g^{\#}(z)=\frac{|g'(z)|}{1+|g(z)|^2}$$ is the spherical derivative.
\end{lemma}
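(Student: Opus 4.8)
The plan is to prove both implications of the equivalence, with the real substance lying in the forward direction (non-normality $\Rightarrow$ existence of the rescaling); the converse is a compactness argument.

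First I would dispose of the converse (the ``if'' direction). Assume the sequences $z_n,\rho_n,f_n$ exist with $g_n(\zeta)=\rho_n^{\alpha}f_n(z_n+\rho_n\zeta)\to g$ spherically locally uniformly and $g$ nonconstant, and argue by contradiction: if $\mathcal{F}$ were normal at $z_0$, then by Marty's criterion there is a closed disc $\overline{\Delta(z_0,s)}\subset\Delta$ and a constant $M$ with $f^{\#}(z)\le M$ for all $f\in\mathcal{F}$ and $|z-z_0|\le s$. Passing to a subsequence, $f_n\to f$ spherically locally uniformly near $z_0$, where $f$ is meromorphic or $\equiv\infty$; by Hurwitz's theorem the limit inherits the multiplicity bounds (zeros of order $\ge\ell$, poles of order $\ge j$). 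Since $z_n+\rho_n\zeta\to z_0$, I would run a case analysis according to whether $z_0$ is an ordinary point, a zero, or a pole of $f$. In each case the admissibility range $-\ell<\alpha<j$ forces the powers $\rho_n^{\alpha}$ and $\rho_n^{1+\alpha}$ onto the degenerate side, so that $g_n$ can only converge to a constant in $\widehat{\C}$, contradicting that $g$ is nonconstant. Hence $\mathcal{F}$ is not normal at $z_0$.

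For the forward direction, normalize $z_0=0$ and fix $0<r<1$ with $\overline{\Delta(0,r)}\subset\Delta$. Since $\mathcal{F}$ is not normal at $0$, Marty's criterion provides $f_n\in\mathcal{F}$ whose spherical derivatives blow up interior to $\Delta(0,r)$. The engine is a Pang-type weighted maximization: one maximizes, over $|w|\le r$ and admissible scales, a weighted normalized spherical derivative of $f_n$ whose weight is tuned to $\alpha$ (e.g.\ a factor $(r^2-|w|^2)^{\gamma}$ with $\gamma$ determined by $\alpha$). Taking $z_n$ a maximizer and $\rho_n$ the associated scale, one checks $z_n\to0$ and $\rho_n\to0$, and that with these choices the rescaled functions $g_n(\zeta)=\rho_n^{\alpha}f_n(z_n+\rho_n\zeta)$ satisfy the normalization $g_n^{\#}(0)=1$ together with a uniform bound $g_n^{\#}(\zeta)\le 1+o(1)$ on every disc $|\zeta|\le R$, the bound coming from the maximality of $z_n$ (the weight ratio tends to $1$ as $\rho_n\to0$). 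By Marty's criterion the $g_n$ then form a normal family on $\C$, so a subsequence converges spherically locally uniformly to a meromorphic $g$; passing to the limit gives $g^{\#}(0)=1$ (so $g$ is nonconstant) and $g^{\#}(\zeta)\le g^{\#}(0)=1$ everywhere. The order bound is the standard Ahlfors--Shimizu estimate: from $g^{\#}\le 1$,
\[ T_0(r,g)=\int_0^r\frac{dt}{t}\,\frac{1}{\pi}\int_{|\zeta|<t}\big(g^{\#}(\zeta)\big)^2\,dA(\zeta)\le\int_0^r\frac{dt}{t}\cdot t^2=\frac{r^2}{2}, \]
so $T(r,g)=O(r^2)$ and $g$ has order at most $2$.

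The main obstacle is the weighted maximization in the forward direction: making the exact power $\rho_n^{\alpha}$ emerge while simultaneously guaranteeing the normalization $g_n^{\#}(0)=1$, the local bound $g_n^{\#}\le 1+o(1)$, and — most delicately — that the limit $g$ is nonconstant. This is precisely where the admissibility $-\ell<\alpha<j$ and the multiplicity hypotheses are indispensable: they prevent the maximizing scale from concentrating all the mass at a high-order zero or pole and collapsing $g$ to a constant, which is exactly the failure mode that the range of $\alpha$ rules out.
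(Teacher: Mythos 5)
You should first note what the paper itself does with this statement: it offers no proof at all. The lemma is imported verbatim with a citation to Zalcman's survey \cite{Za}; it is the Pang--Zalcman rescaling lemma, and the extended range $-\ell<\alpha<j$ (beyond Pang's original $-1<\alpha<1$) is itself a nontrivial strengthening due to Pang, Chen--Gu and Xue--Pang. So there is no in-paper argument to compare against, and your proposal must be judged on its own merits. Its architecture does match the literature proof: Marty's criterion plus a Pang-type weighted maximization for the forward direction, Marty/Hurwitz with a case analysis for the converse, and the Ahlfors--Shimizu characteristic for the order bound. The converse sketch is sound in outline (though the case analysis is heavier than you indicate, since the zeros and poles of the $f_n$ need not sit at $z_0$ and their distance to $z_n$ must be compared with $\rho_n$), and the order-at-most-$2$ computation from $g^{\#}\le 1$ is correct and standard.

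The genuine gap is in the forward direction, at exactly the step you defer with ``one checks.'' For $\alpha\neq 0$ the spherical derivative does not transform multiplicatively under your rescaling:
\begin{equation*}
g_n^{\#}(\zeta)\;=\;\frac{\rho_n^{1+\alpha}\,|f_n'(z_n+\rho_n\zeta)|}{1+\rho_n^{2\alpha}|f_n(z_n+\rho_n\zeta)|^2}\;\neq\;\rho_n^{1+\alpha}\,f_n^{\#}(z_n+\rho_n\zeta),
\end{equation*}
so maximizing a weighted functional $(r^2-|w|^2)^{\gamma}f_n^{\#}(w)$ and invoking ``the weight ratio tends to $1$'' yields a bound on $\rho_n^{1+\alpha}f_n^{\#}$, not on $g_n^{\#}$, and it does not deliver the normalization $g_n^{\#}(0)=1$: for instance when $\alpha<0$ and $|f_n(z_n)|$ is large, $g_n^{\#}(0)$ can be $o(1)$ even at the maximizer, and the limit collapses to a constant --- the very failure mode you hoped the maximization would rule out. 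Pang's actual argument fixes $\rho_n$ implicitly, by solving $\rho^{1+\alpha}|f_n'(z_n)|=1+\rho^{2\alpha}|f_n(z_n)|^2$ so that $g_n^{\#}(0)=1$ holds by construction, and then runs the maximization on a functional adapted to that implicit scale; furthermore, passing from $-1<\alpha<1$ to the full range $-\ell<\alpha<j$ is a separate and harder step in which the multiplicity hypotheses enter the estimates quantitatively (controlling $|f_n|$ against powers of $\rho_n$ near high-order zeros and poles), not merely as a heuristic preventing concentration. Since the existence of the correct $(z_n,\rho_n)$ with both $g_n^{\#}(0)=1$ and the locally uniform bound $g_n^{\#}\le 1+o(1)$ is the entire content of the lemma, the proposal as written is incomplete at its decisive point, although the surrounding structure is the right one.
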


\begin{proof}[Proof of Theorem~\ref{thm5}] Suppose that $\mathcal{F}$ is not normal in $D.$ Then there exists at least one point $z_0\in D$ such that $\mathcal{F}$ is not normal at the point $z_0.$ Without loss of generality we assume that $z_0=0.$ By Lemma~\ref{Zalcman} for $\alpha =-\frac{k}{m},$ there exist points $z_n\rightarrow 0,$ positive numbers $\rho_n\rightarrow 0$ and functions $f_n\in\mathcal{F}$ such that 
\begin{equation}\label{5.1} g_n(\zeta)=\rho_n^{-\frac{k}{m}}f_n(z_n+\rho_n\zeta)\Longrightarrow g(\zeta)
\end{equation} 
locally uniformly with respect to the spherical metric, where $g$ is a nonconstant meromorphic function in $\C$ and its order is less than or equal to $2.$ 
From \eqref{5.1}, we obtain
\begin{equation*}
(g_n^{\nu}(\zeta))^{(k)}= [(\rho_n^{-\frac{k}{m}}f_n(z_n+\rho_n\zeta))^{\nu}]^{(k)}=\rho_n^{\frac{(m-\nu)k}{m}}[f_n^{\nu}(z_n+\rho_n\zeta)]^{(k)}, 
\end{equation*} for $\nu = m, \dots, q-1,q.$ Hence, we have
\begin{align}\label{5.2}
[Q(f_n)(z_n+\rho_n\zeta)]^{(k)}=&\rho_n^{\frac{(q-m)k}{m}}[g_n^q(\zeta)]^{(k)}+a_{q-1}\rho_n^{\frac{(q-m-1)k}{m}}[g_n^{q-1}(\zeta)]^{(k)}\notag\\
&+\dots+a_m[g_n^{m}(\zeta)]^{(k)}\Longrightarrow a_m[g^{m}(\zeta)]^{(k)}
\end{align} also locally uniformly with respect to the spherical metric.

If $a_m[g^{m}(\zeta)]^{(k)} -b \equiv 0$, then $g^m(\zeta)\equiv P_k(\zeta)$ where $P_k(\zeta)$ is a polynomial of degree $k,$ this contradicts the fact that $g$ is a nonconstant meromorphic function since $m\geq k+2.$ 
Thus, $a_m[g^{m}(\zeta)]^{(k)} -b \not\equiv 0.$

If $a_m[g^{m}(\zeta)]^{(k)} \ne b,$ then by Lemma~\ref{constant}, we obtain that $g$ is a constant which is a contradiction. Hence, $a_m[g^{m}(\zeta)]^{(k)} - b$ must have one zero at least. 

Now we claim that $a_m[g^{m}(\zeta)]^{(k)} - b$ has just a unique zero. In fact, suppose that there exist two distinct zeros $\zeta_0$ and $\zeta_0^*$ of $a_m[g^{m}(\zeta)]^{(k)} - b$ and choose $\delta>0$ small enough such that $D(\zeta_0,\delta)\cup D(\zeta^*_0,\delta)=\emptyset$ and $a_m[g^{m}(\zeta)]^{(k)} - b$ has no other zeros in $D(\zeta_0,\delta)\cap D(\zeta^*_0,\delta)$ except for $\zeta_0$ and $\zeta_0^*,$ where $D(\zeta_0,\delta)=\{\zeta\in\C:|\zeta-\zeta_0|<\delta\}$ and $D(\zeta^*_0,\delta)=\{\zeta\in\C:|\zeta-\zeta^*_0|<\delta\}.$

From \eqref{5.2}, by Hurwitz's theorem, there exist points $\zeta_n\in D(\zeta_0,\delta),$ $\zeta^*_n\in D(\zeta^*_0,\delta)$ such that for sufficiently large $n$, we have $$[Q(f_n)(z_n+\rho_n\zeta_n)]^{(k)}-b=0\quad\text{and}\quad [Q(f_n)(z_n+\rho_n\zeta^*_n)]^{(k)}-b=0.$$
By the assumption that $[Q(f_n)]^{(k)}$ and $[Q(f_1)]^{(k)}$ share $b$  in $D$ ignoring multiplicity for each $n,$ it follows that $$[Q(f_1)(z_n+\rho_n\zeta_n)]^{(k)}-b=0\quad\text{and}\quad [Q(f_1)(z_n+\rho_n\zeta^*_n)]^{(k)}-b=0.$$ By letting $n\rightarrow\infty,$ and noting $z_n+\rho_n\zeta_n\rightarrow 0$ and $z_n+\rho_n\zeta^*_n\rightarrow 0,$ we get $$[Q(f_1)(0)]^{(k)}-b=0.$$ Since the zeros of $[Q(f_1)]^{(k)}-b$ has no accumulation points, for sufficiently large $n,$ we have $$z_n+\rho_n\zeta_n= 0\quad\text{and}\quad z_n+\rho_n\zeta^*_n= 0.$$ Hence $$\zeta_n=-\frac{z_n}{\rho_n}\quad\text{and}\quad\zeta^*_n=-\frac{z_n}{\rho_n}.$$ This contradicts the fact that  $\zeta_n\in D(\zeta_0,\delta),$ $\zeta^*_n\in D(\zeta^*_0,\delta)$ and $D(\zeta_0,\delta)\cup D(\zeta^*_0,\delta)=\emptyset.$ Thus $a_m[g^{m}(\zeta)]^{(k)} - b$ has just a unique zero. This contradicts Lemma~\ref{transcendental} and Lemma~\ref{Ligu}. Theorem~\ref{thm5} is proved completely. 
\end{proof}

\section{Proofs of Theorem~\ref{th2}}

To prove the results, we need to prove the following lemmas.
\begin{lemma}\label{keylemma} Let $f$ and $g$ be two nonconstant meromorphic functions, and let $\alpha$ be a non-zero small function with respect to $f$ and $g.$ If $f$ and $g$ share $\alpha$  ignoring multiplicity. then one of the following three cases holds:
\begin{enumerate}
\item[(i)] $T(r,f)\leq N_2(r,f)+N_2(r,g)+N_2(r,\frac{1}{f})+N_2(r,\frac{1}{g})+2\big(\overline{N}(r,f)+\overline{N}(r,\frac{1}{f})\big )+\overline{N}(r,g)+\overline{N}(r,\frac{1}{g})+S(r,f)+S(r,g),$ and the same inequality holding for $T(r,g);$
\item[(ii)] $f\equiv g;$ or
\item[(iii)] $fg\equiv\alpha^2.$ 
\end{enumerate}
\end{lemma}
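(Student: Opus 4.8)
The plan is to use the standard machinery of auxiliary functions built from $f-\alpha$ and $g-\alpha$ that underlies nearly all sharing-value uniqueness theorems. First I would normalize by setting $F = f/\alpha$ and $G = g/\alpha$ so that $f,g$ sharing $\alpha$ ignoring multiplicity translates into $F,G$ sharing the value $1$ ignoring multiplicity, with the cost of introducing only $S(r,f)+S(r,g)$ terms since $\alpha$ is small with respect to both (note $\alpha$ is small for $g$ as well, which we may invoke from Lemma~\ref{lemmaSmall} if needed, but here it is given directly in the hypothesis). The central object is the classical auxiliary function
\begin{equation*}
H = \left(\frac{F''}{F'} - 2\frac{F'}{F-1}\right) - \left(\frac{G''}{G'} - 2\frac{G'}{G-1}\right),
\end{equation*}
whose vanishing or non-vanishing dichotomizes the argument.

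The main case split is on whether $H \equiv 0$ or $H \not\equiv 0$. When $H \not\equiv 0$, I would show that $H$ has poles only at a controlled set of points: common simple $1$-points of $F$ and $G$ are not poles of $H$ (a direct local computation shows $H$ is holomorphic there), so the poles of $H$ are confined to the poles of $F$ and $G$, the zeros of $F'$ and $G'$ away from $1$-points, and the zeros/poles of $\alpha$. Counting the simple common $1$-points via $N_E^{1)}$ and bounding them by $m(r,H) + N(r,H) = N(r,H) + S$, then feeding this into the Second Fundamental Theorem applied to $F$ (with the three targets $0,1,\infty$) and Lemma~\ref{lemma3.1} to control the ramification contributions, yields after a bookkeeping of the various $N_2$, $\overline{N}_L$, and $\overline{N}_E^{(2}$ terms exactly the inequality in case (i). The symmetric estimate for $T(r,g)$ follows by interchanging the roles of $f$ and $g$.

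When $H \equiv 0$, the equation integrates twice: $F''/F' - 2F'/(F-1) = G''/G' - 2G'/(G-1)$ integrates to give
\begin{equation*}
\frac{1}{F-1} = \frac{A}{G-1} + B
\end{equation*}
for constants $A \neq 0$ and $B$, i.e. a Möbius relation between $F$ and $G$. I would then run through the three subcases determined by $(A,B)$: the degenerate possibilities force either $F \equiv G$ (giving $f \equiv g$, case (ii)) or $FG \equiv 1$ (giving $fg \equiv \alpha^2$, case (iii)), while the remaining linear-fractional configurations are ruled out by the Second Fundamental Theorem together with the sharing hypothesis, since they would force one of $F,G$ to omit a value too often and contradict $f,g$ being nonconstant. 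The hard part will be the meticulous accounting in the $H \not\equiv 0$ branch: correctly separating the simple common zeros from the multiple and $L$-type zeros and verifying that the coefficients on each counting function land precisely as stated (the single $N_2$ for each of the four quantities, the factor $2$ on $\overline{N}(r,f)$ and $\overline{N}(r,1/f)$ versus the factor $1$ on the $g$-terms). This asymmetry traces back to using the SFT on $F$ rather than $G$, and getting the constants to match the claimed inequality exactly is where the care is required.
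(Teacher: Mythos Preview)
Your proposal follows essentially the same route as the paper's own proof: normalize to $F=f/\alpha$, $G=g/\alpha$, introduce the identical auxiliary function $H$, split on $H\not\equiv 0$ versus $H\equiv 0$, and in the first branch feed the bound $N_E^{1)}\le N(r,H)+S$ into the Second Fundamental Theorem for $F$ at $0,1,\infty$, with Lemma~\ref{lemma3.1} handling the ramification bookkeeping.

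One correction is needed in your $H\equiv 0$ branch. The non-degenerate M\"obius configurations (i.e.\ the parameter choices other than the two leading to $F\equiv G$ or $FG\equiv 1$) are \emph{not} ``ruled out'' by a contradiction with nonconstancy, and nothing forces $F$ or $G$ to omit a value. What actually happens, and what the paper does, is that the M\"obius relation identifies a specific value $c$ for which $\overline N\bigl(r,\frac{1}{F-c}\bigr)$ equals $\overline N(r,G)$ or $\overline N\bigl(r,\frac{1}{G}\bigr)$; applying the Second Fundamental Theorem to $F$ at $0,\infty,c$ then yields an inequality of the form
\[
T(r,f)\le \overline N(r,f)+\overline N\Bigl(r,\frac{1}{f}\Bigr)+\overline N(r,g)+S(r,f)+S(r,g)
\]
(or a variant with $\overline N(r,1/g)$), which is already stronger than the inequality in case~(i). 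So these subcases are not excluded---they simply land in alternative~(i). With that adjustment your outline matches the paper's argument.
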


\begin{proof}
Set \begin{equation}F:=\frac{f}{\alpha},\, G:=\frac{g}{\alpha}\label{t1}\end{equation} and 
\begin{equation}H:=\frac{F''}{F'}-2\frac{F'}{F-1}-\frac{G''}{G'}+2\frac{G'}{G-1}\label{t2}
\end{equation}

Let $z_0\not\in\{z:\alpha(z)=0\}\cup\{z:\alpha(z)=\infty\}$ be a common simple zero of $f-\alpha$ and $g-\alpha$. Then, it follows from \eqref{t1} that $z_0$ is a common simple zero of $F-1$ and $G-1$. By a simple computation on local expansions shows that $H(z_0)=0.$ 

Suppose that $H\not\equiv 0.$ Since $f$ and $g$ share $\alpha$  ignoring multiplicity, we have
  \begin{align}\label{t3}
N^{1)}_{E}\big(r,\frac{1}{f-\alpha}\big)&=N^{1)}_{E}\big(r,\frac{1}{f-\alpha}|\alpha\ne 0,\alpha\ne\infty\big)+N^{1)}_{E}\big(r,\frac{1}{f-\alpha}|\alpha=0\big)\notag\\
&\quad+N^{1)}_{E}\big(r,\frac{1}{f-\alpha}|\alpha=\infty\big)\notag\\
&\le N^{1)}_{E}\big(r,\frac{1}{F-1}\big)+N\big(r,\frac{1}{\alpha}\big)+N(r,\alpha)\notag\\
&\le N\big(r,\frac{1}{H}\big)+S(r,f)\notag\\
&\le T(r,H) + S(r,f)\le N(r,H)+S(r,f)+S(r,g)
\end{align}

By \eqref{t2} we see that F has only simple poles. On the other hand, let $$z_1\not\in\{z:\alpha(z)=0\}\cup\{z:\alpha(z)=\infty\}$$ be a common multiple zero of $f-\alpha$ and $g-\alpha$ with the same multiplicity. Then, it follows from \eqref{t1} that $z_1$ is a common multiple zero of $F-1$ and $G-1$ with the same multiplicity and by calculating we get $H(z_1)\ne\infty$. In addition, by a  simple computation we can easily follow from \eqref{t2} that any simple pole of $F$ and $G$ is not a pole of $H.$ Therefore, by \eqref{t2}, the poles of $H$ only occur at zeros of $F'$ that are not the zeros of $F(F-1)$, zeros of $G'$ that are not the zeros of $G(G-1)$, the multiple zeros of $F$ and $G$ and the multiple poles of $F$ and $G,$ the common zeros of  $F-1$ and $G-1$ where their multiples are different and zeros or poles of $\alpha$. We denote by $N_{0}\big(r,\frac{1}{F'}\big)$ the counting function of zeros of $F'$ that are not the zeros of $F(F-1),$ and by $\overline{N}_{0}\big(r,\frac{1}{F'}\big)$ the corresponding reduced counting function. Similarly, we can define $N_{0}\big(r,\frac{1}{G'}\big)$ and $\overline{N}_{0}\big(r,\frac{1}{G'}\big).$ Hence, from the above observations, we obtain

\begin{align}\label{t4}
N(r,H)&\le \overline{N}_{(2}(r,F)+\overline{N}_{(2}(r,G)+\overline{N}_{(2}\big(r,\frac{1}{F}\big)+\overline{N}_{(2}\big(r,\frac{1}{G}\big)\notag\\
&\quad+\overline{N}_{L}\big(r,\frac{1}{F-1}\big)+\overline{N}_{L}\big(r,\frac{1}{G-1}\big)+\overline{N}_{0}\big(r,\frac{1}{F'}\big)+\overline{N}_{0}\big(r,\frac{1}{G'}\big)\notag\\
&\quad+\overline{N}(r,\alpha)+\overline{N}\big(r,\frac{1}{\alpha}\big)+S(r,f)+S(r,g)\notag\\
&\le \overline{N}_{(2}(r,F)+\overline{N}_{(2}(r,G)+\overline{N}_{(2}\big(r,\frac{1}{F}\big)+\overline{N}_{(2}\big(r,\frac{1}{G}\big)\notag\\
&\quad+\overline{N}_{L}\big(r,\frac{1}{F-1}\big)+\overline{N}_{L}\big(r,\frac{1}{G-1}\big)+\overline{N}_{0}\big(r,\frac{1}{F'}\big)+\overline{N}_{0}\big(r,\frac{1}{G'}\big)\notag\\
&\quad+S(r,f)+S(r,g).
\end{align}
Since $f$ and $g$ share $\alpha$  ignoring multiplicity, we have
\begin{align}\label{t5}
\overline{N}\big(r,\frac{1}{f-\alpha}\big)&=N^{1)}_{E}\big(r,\frac{1}{f-\alpha}\big)+\overline{N}^{(2}_{E}\big(r,\frac{1}{f-\alpha}\big)\notag\\
&\quad+\overline{N}_{L}\big(r,\frac{1}{f-\alpha}\big)+\overline{N}_{L}\big(r,\frac{1}{g-\alpha}\big)\notag\\
&\le N^{1)}_{E}\big(r,\frac{1}{f-\alpha}\big)+\overline{N}^{(2}_{E}\big(r,\frac{1}{F-1}\big)+\overline{N}_{L}\big(r,\frac{1}{F-1}\big)\notag\\
&\quad+\overline{N}_{L}\big(r,\frac{1}{G-1}\big)+S(r,f)+S(r,g)
\end{align}
where the inequality follows from the fact that
\begin{align*}\overline{N}^{(2}_{E}\big(r,\frac{1}{f-\alpha}\big)&\le\overline{N}^{(2}_{E}\big(r,\frac{1}{f-\alpha}|\alpha\ne 0,\infty\big)+\overline{N}^{(2}_{E}\big(r,\frac{1}{f-\alpha}|\alpha =0\big)\\
&\quad+\overline{N}^{(2}_{E}\big(r,\frac{1}{f-\alpha}|\alpha=\infty\big)\\
&\le\overline{N}^{(2}_{E}\big(r,\frac{1}{f-\alpha}|\alpha\ne 0,\infty\big)+N(r,\frac{1}{\alpha})+N(r,\alpha)\\
&\le \overline{N}^{(2}_{E}\big(r,\frac{1}{F-1}\big)+S(r,f),
\end{align*}
Similarly,
\begin{align*}\overline{N}_{L}\big(r,\frac{1}{f-\alpha}\big)&\le \overline{N}_{L}\big(r,\frac{1}{f-\alpha}\mid \alpha\ne 0,\infty\big)+\overline{N}_{L}\big(r,\frac{1}{f-\alpha}\mid \alpha= 0\big)\\
&\quad+\overline{N}_{L}\big(r,\frac{1}{f-\alpha}\mid \alpha = \infty\big)\\
&\le \overline{N}_{L}\big(r,\frac{1}{F-1}\big)+N\big(r,\frac{1}{\alpha}\big)+N(r,\alpha)\\
&\le\overline{N}_{L}\big(r,\frac{1}{F-1}\big)+S(r,f),
\end{align*} and $$\overline{N}_{L}\big(r,\frac{1}{g-\alpha}\big)\le \overline{N}_{L}\big(r,\frac{1}{G-1}\big)+S(r,g).$$
On the other hand, from the definition of $N_0(r,\frac{1}{G'})$ we follow that 
\begin{align}\label{t6}
N\big(r,\frac{1}{G'}\big )&\ge N_0\big(r,\frac{1}{G'}\big)+\overline{N}^{(2}_{E}\big(r,\frac{1}{F-1}\big)+\overline{N}_{L}\big(r,\frac{1}{G-1}\big)\notag\\
&\quad+N_{(2}\big(r,\frac{1}{G}\big)-\overline{N}_{(2}\big(r,\frac{1}{G}\big).
\end{align}
Combining \eqref{t6} and Lemma~\ref{lemma3.1}, we get
\begin{equation}\label{t7}
\overline{N}(r,G)+\overline{N}\big(r,\frac{1}{G}\big)+S(r,g)\ge \overline{N}^{(2}_{E}\big(r,\frac{1}{F-1}\big)+\overline{N}_{L}\big(r,\frac{1}{G-1}\big)+N_0\big(r,\frac{1}{G'}\big)
\end{equation}
Moreover, we have
\begin{align}\label{t8}
\overline{N}_L\big(r,\frac{1}{F-1}\big)&\le N\big(r,\frac{1}{F-1}\big)-\overline{N}\big(r,\frac{1}{F-1}\big)\notag\\
&\le \overline{N}\big(r,\frac{1}{F}\big)+\overline{N}(r,F)+S(r,f),
\end{align} where the second inequality follows from Lemma~\ref{lemma3.1} and the fact that
$$N\big(r,\frac{1}{F-1}\big)-\overline{N}\big(r,\frac{1}{F-1}\big)+N\big(r,\frac{1}{F}\big)-\overline{N}\big(r,\frac{1}{F}\big)\le N\big(r,\frac{1}{F'}\big).$$
By repeating the above argument, we can prove
\begin{align}\label{t9}\overline{N}_L\big(r,\frac{1}{G-1}\big)\le \overline{N}\big(r,\frac{1}{G}\big)+\overline{N}(r,G)+S(r,g),
\end{align}

On the other hand, we have
\begin{equation}\label{t11} \overline{N}\big(r,\frac{1}{F-1}\big)\le\overline{N}\big(r,\frac{1}{f-\alpha}\big)+\overline{N}(r,\alpha)\le \overline{N}\big(r,\frac{1}{f-\alpha}\big) +S(r,f).
\end{equation}
Applying the Second Main Theorem for $F$ and $0,\infty$ and $1$, we have 
\begin{align}\label{t12}
T(r,F)\le \overline{N}(r,F)+\overline{N}\big(r,\frac{1}{F}\big)+\overline{N}\big(r,\frac{1}{F-1}\big)-N_0\big(r,\frac{1}{F'}\big)+S(r,f).
\end{align}
Therefore, by combining \eqref{t4}, \eqref{t5} and \eqref{t7}-\eqref{t12} and using the fact that 
$$N_{2}(r,F)\le N_{2}(r,f)+N_{2}\big(r,\frac{1}{\alpha}\big)\le N_{2}(r,f)+S(r,f),$$
$$N_{2}\big(r,\frac{1}{F}\big)\le N_{2}\big(r,\frac{1}{f}\big)+N_{2}(r,\alpha)\le N_{2}\big(r,\frac{1}{f}\big)+S(r,f),$$
and similar inequalities 
\begin{align*}&N(r,F)\le N(r,f)+S(r,f),\quad  N\big(r,\frac{1}{F}\big)\le N\big(r,\frac{1}{f}\big)+S(r,f),\\
&N_{2}(r,G)\le N_{2}(r,g)+S(r,g),\quad N_{2}\big(r,\frac{1}{G}\big)\le N_{2}\big(r,\frac{1}{g}\big)+S(r,g),
\end{align*}
 we obtain
\begin{align}\label{t13}
T(r,F)&\le N_2(r,f)+N_2(r,g)+N_2\big(r,\frac{1}{f}\big)+N_2\big(r,\frac{1}{g}\big )+\overline{N}(r,g)+\overline{N}\big(r,\frac{1}{g}\big)\notag\\
&\quad+2\Big(\overline{N}(r,f)+\overline{N}\big(r,\frac{1}{f}\big)\Big)+S(r,f)+S(r,g).
\end{align}
On the other hand, 
\begin{equation}\label{t14}
T(r,F)=T\big(r,\frac{f}{\alpha}\big)\ge T(r,f)-T(r,\alpha)+O(1).
\end{equation}

Combining inequalities \eqref{t13} and \eqref{t14}, we obtain (i).

Suppose that $H\equiv 0$. We deduce from \eqref{t2} that
$$F=\frac{(b+1)G+a-b-1}{bG+a-b},$$ where $a,b$ are finite complex numbers and $a\ne 0.$ 

If $b\ne 0,-1$, then $$F-\frac{b+1}{b}=-\frac{a}{b(bG+a-b)}.$$ Applying the Second Main Theorem for $F$ and $0,\infty$ and $\frac{b+1}{b}$, we have
\begin{align*}T(r,F)&\le \overline{N}(r,F)+\overline{N}\big(r,\frac{1}{F-\frac{b+1}{b}}\big)+\overline{N}\big(r,\frac{1}{F}\big)+S(r,f)\\
&\le\overline{N}(r,F)+\overline{N}(r,G)+\overline{N}\big(r,\frac{1}{F}\big)+S(r,f)
\end{align*}
Hence, we get 
\begin{align*}T(r,f)&\le T(r,F)+S(r,f)\\
&\le \overline{N}(r,f)+\overline{N}(r,g)+\overline{N}\big(r,\frac{1}{f}\big)+S(r,f)+S(r,g),\end{align*} which implies (i).

If $b=0,$ then $F=\dfrac{G+a-1}{a}$. If $a=1,$ then $F=G$ which implies (ii). If $a\ne 1,$ applying the Second Main Theorem for $F$ and $0,\infty$ and $\frac{a-1}{a}$, we have
\begin{align*}T(r,f)&\le T(r,F)+S(r,f)\\
&\le \overline{N}(r,F)+\overline{N}\big(r,\frac{1}{F-\frac{a-1}{a}}\big)+\overline{N}\big(r,\frac{1}{F}\big)+S(r,f)\\
&\le\overline{N}(r,F)+\overline{N}(r,\frac{1}{G})+\overline{N}\big(r,\frac{1}{F}\big)+S(r,f)\\
&\le\overline{N}(r,f)+\overline{N}(r,\frac{1}{f})+\overline{N}\big(r,\frac{1}{f}\big)+S(r,f)+S(r,g).
\end{align*} We obtain (i).

If $b=-1,$ then $F=\dfrac{a}{a+1-G}.$ If $a\ne -1,$ applying the Second Main Theorem for $G$ and $0,\infty$ and $a+1,$ we have
\begin{align*}T(r,g)&\le T(r,G)+S(r,g)\\
&\le \overline{N}(r,G)+\overline{N}\big(r,\frac{1}{G-(a+1)}\big)+\overline{N}\big(r,\frac{1}{G}\big)+S(r,g)\\
&\le\overline{N}(r,G)+\overline{N}(r,F)+\overline{N}\big(r,\frac{1}{G}\big)+S(r,g)\\
&\le\overline{N}(r,f)+\overline{N}(r,g)+\overline{N}\big(r,\frac{1}{g}\big)+S(r,f)+S(r,g).
\end{align*} We get (i). If $a=-1,$ then $FG\equiv 1$ which implies (iii).
\end{proof}


\begin{theorem}\label{th1}
 Let $f$ and $g$ be nonconstant meromorphic functions, $\alpha$ be a non-zero small function with respect to $f.$ 
Suppose $[Q(f)]^{(k)}$ and $[Q(g)]^{(k)}$ share $\alpha$  ignoring multiplicity. If 
$q>4k+12+\upsilon (5k+2)+5\sum_{i=\upsilon+1}^{l}m_i,$
then one of the following holds:
\begin{enumerate}
\item[(i)] $Q(f)=Q(g)+c$, for some constant $c$. 
\item[(ii)] $[Q(f)]^{(k)}[Q(g)]^{(k)}=\alpha^2.$
\end{enumerate}
\end{theorem}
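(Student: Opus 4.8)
The plan is to apply Lemma~\ref{keylemma} to the pair $\phi:=[Q(f)]^{(k)}$ and $\psi:=[Q(g)]^{(k)}$, which share $\alpha$ ignoring multiplicity by hypothesis, and then to rule out its first alternative using the size of $q$. First I would record the preliminary reductions. Since $q>4k+12+\upsilon(5k+2)+5\sum_{i=\upsilon+1}^{l}m_i$ certainly exceeds $\upsilon(k+1)+\sum_{i=\upsilon+1}^{l}m_i+6$, Lemma~\ref{lemmaSmall} applies and gives $T(r,f)=O(T(r,g))$, $T(r,g)=O(T(r,f))$, and that $\alpha$ is small with respect to $g$ as well. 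Together with Lemma~\ref{Lemma 3.3} (so that $T(r,Q(f))=qT(r,f)+S(r,f)$, and likewise for $g$), this shows $\alpha$ is a small function with respect to both $\phi$ and $\psi$, and that $\phi,\psi$ are nonconstant. Hence Lemma~\ref{keylemma} is available for $\phi,\psi$ and yields one of its three alternatives.

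Two of the alternatives give the conclusion immediately. Alternative (iii), $\phi\psi\equiv\alpha^2$, is exactly assertion (ii). For alternative (ii), $\phi\equiv\psi$ means $[Q(f)]^{(k)}=[Q(g)]^{(k)}$; since $l\le\upsilon+\sum_{i=\upsilon+1}^{l}m_i$ (each of the $l-\upsilon$ indices with $m_i\le k$ contributes $m_i\ge 1$), the hypothesis forces $q-2l-2k-4>0$, so Lemma~\ref{lemma3.6} gives $Q(f)=Q(g)+c$, which is assertion (i). It therefore remains to show that alternative (i) of Lemma~\ref{keylemma} cannot occur.

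For this, the main task is to bound the right-hand side of the characteristic inequality for $\phi$ (and its twin for $\psi$) by a $q$-free multiple of $T(r,f)$ (resp.\ $T(r,g)$). The poles are easy: every pole of $\phi$ has multiplicity at least $q+k$, so $\overline{N}(r,\phi)=\overline{N}(r,f)$ and $N_2(r,\phi)=2\overline{N}(r,f)$, and similarly for $\psi$. The essential point is the zero term. Writing $[Q(f)]^{(k)}=[Q(f)-c]^{(k)}$ for a constant $c$, I would feed the first inequality of Lemma~\ref{lemma3.1}, applied to $Q(f)-c$ with $p=2$, into the characteristic inequality: this cancels the $T(r,\phi)$ on the left against the $T(r,Q(f)-c)=T(r,Q(f))+O(1)$ produced by $N_2(r,\tfrac1\phi)$, and replaces $N_2(r,\tfrac1\phi)$ by $N_{k+2}(r,\tfrac{1}{Q(f)-c})$, leaving an inequality whose left side is $T(r,Q(f))=qT(r,f)+S(r,f)$. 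The two surviving zero contributions, $N_{k+2}(r,\tfrac{1}{Q(f)-c})$ and $\overline{N}(r,\tfrac1\phi)$, are then estimated through the ramification data exactly as in the proof of Lemma~\ref{lemmaSmall}: a zero of $f-\zeta_i$ of order $s$ produces a zero of $Q(f)-Q(\zeta_i)$ of order $(m_i+1)s$, which survives $k$ differentiations and so is genuinely counted only when $(m_i+1)s>k$; thus the high-ramification points $m_i>k$ enter only through their \emph{number} $\upsilon$ (each giving $O(1)\cdot\overline{N}(r,\tfrac{1}{f-\zeta_i})$) rather than through $\sum_{i\le\upsilon}m_i\approx q$, while the remaining zeros, over low-ramification or non-critical values, are controlled by the ramification bound $N(r,\tfrac1{f'})\le \overline{N}(r,f)+N(r,\tfrac1f)+S(r,f)\le 2T(r,f)+S(r,f)$ coming from the last part of Lemma~\ref{lemma3.1}, which is crucially independent of $q$.

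Finally I would add the inequality for $f$ to the symmetric one for $g$ (the latter is the twin statement in Lemma~\ref{keylemma}(i)), so that both sides become symmetric and the $O(T(r,g))$ terms combine with the $O(T(r,f))$ terms into a single constant times $T(r,f)+T(r,g)$. Collecting the constants should produce
$$q\big(T(r,f)+T(r,g)\big)\le\Big(4k+12+\upsilon(5k+2)+5\sum_{i=\upsilon+1}^{l}m_i\Big)\big(T(r,f)+T(r,g)\big)+S(r,f)+S(r,g),$$
which contradicts the hypothesis on $q$. I expect the genuine obstacle to be this last estimate: carefully tracking the per-point multiplicities so that the high-order critical points enter only through $\upsilon$ while the low-order and non-critical ramification is absorbed into a $q$-free constant. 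This bookkeeping, once the contributions from $\phi$ and $\psi$ are combined, is exactly where the coefficients $5k+2$ and $5$ in the stated bound are produced.
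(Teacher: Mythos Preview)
Your overall architecture is right: apply Lemma~\ref{keylemma} to $\phi=[Q(f)]^{(k)}$ and $\psi=[Q(g)]^{(k)}$, dispose of alternatives (ii) and (iii) via Lemma~\ref{lemma3.6} and the definition, and then rule out alternative (i). Your check that $q>2l+2k+4$ (needed for Lemma~\ref{lemma3.6}) is fine. The gap is in how you propose to eliminate alternative (i).

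The cancellation you describe, via the first inequality of Lemma~\ref{lemma3.1} applied to $Q(f)-c$, does leave $qT(r,f)$ on the left and $N_{k+2}\bigl(r,\frac{1}{Q(f)-c}\bigr)$ among the terms on the right. The problem is bounding this last term by a $q$-free multiple of $T(r,f)$. For a \emph{single} constant $c$, the zeros of $Q(f)-c$ are the $f$-preimages of the roots of $Q(z)=c$, and generically there are $q$ such roots, so $N_{k+2}\bigl(r,\frac{1}{Q(f)-c}\bigr)$ can be as large as $(k+2)\sum_{j}\overline N\bigl(r,\frac{1}{f-\eta_j}\bigr)\le (k+2)q\,T(r,f)$, which is useless. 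Your ramification discussion (``a zero of $f-\zeta_i$ of order $s$ produces a zero of $Q(f)-Q(\zeta_i)$ of order $(m_i+1)s$, which survives $k$ differentiations\dots'') concerns the zeros of $\phi$, which lie over \emph{all} the critical values $Q(\zeta_i)$ simultaneously; it does not control the zeros of $Q(f)-c$ for one fixed $c$. So the two ``surviving zero contributions'' you lump together in fact require different inputs, and the one produced by your cancellation cannot be fed into the ramification analysis you sketch.

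The paper's fix is to work with $F_1':=[Q(f)]'=Q'(f)f'$ rather than with $Q(f)-c$. Because $Q'(z)=b\prod_i(z-\zeta_i)^{m_i}$, the truncated counting functions $N_{k+1}\bigl(r,\frac{1}{F_1'}\bigr)$ and $N_k\bigl(r,\frac{1}{F_1'}\bigr)$ \emph{do} decompose over the $\zeta_i$ with exactly the per-point weights you want (so that indices with $m_i>k$ contribute only through $\upsilon$). To recover $qT(r,f)$, the paper uses the polynomial identity $Q(z)-R(z)=a(z-\beta)Q'(z)$ with $\deg R\le q-2$ together with the Logarithmic Derivative Lemma, obtaining
\[
T(r,F_1')\ \ge\ qT(r,f)-N\Bigl(r,\frac{1}{Q'(f)}\Bigr)-N\Bigl(r,\frac{1}{f-\beta}\Bigr)+N\Bigl(r,\frac{1}{F_1'}\Bigr)+S(r,f),
\]
and then applies the first inequality of Lemma~\ref{lemma3.1} to $F_1'$ (with $k-1$ derivatives) so that the $N_2\bigl(r,\frac{1}{\phi}\bigr)$ cancels as in your plan. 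This passage through the \emph{first} derivative, so that the factorisation of $Q'$ is available, is the idea your outline is missing.
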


The conclusion (ii) in Theorem~\ref{th1} can be ruled out if we add more
constraints on  the multiple zeros of $Q'(z)$ or if  $f$ and $g$ share $\infty$ ignoring multiplicities. 

\begin{proof}[{Proof of Theorem~\ref{th1}}]
We denote
\begin{align*}
&F:=[Q(f)]^{(k)},\quad F_1:=Q(f),\\
&G:=[Q(g)]^{(k)},\quad  G_1:=Q(g).\end{align*}

It is easy  to see that 
$$S(r,{F})=S(r,f), \, \, \text{ and } \, \, S(r,G)=S(r,g).$$

By Lemma~\ref{lemmaSmall}, $\alpha$ is also a small function 
with respect to $g$.

Since $[Q(g)]^{(k)}$ share $\alpha$ ignoring multiplicities, by Lemma~\ref{keylemma}, one of the following cases holds:
\begin{enumerate}
\item[(i)] $T(r,F)\leq N_2(r,F)+N_2(r,G)+N_2(r,\frac{1}{F})+N_2(r,\frac{1}{G})+2\big(\overline{N}(r,F)+\overline{N}(r,\frac{1}{F})\big )+\overline{N}(r,G)+\overline{N}(r,\frac{1}{G})+S(r,f)+S(r,g),$ and the same inequality holding for $T(r,G);$
\item[(ii)] $F\equiv G;$ or
\item[(iii)] $FG\equiv\alpha^2.$ 
\end{enumerate}

If Case~(ii) holds then,  by Lemma~\ref{lemma3.6}, we have $Q(f)=Q(g)+c,$
for some constant $c$, which is the conclusion (i)
of the theorem.
If Case~(iii) holds, then we get the conclusion (ii) in the theorem. Therefore, we only have to consider when Case (i) holds,
which we now examine in more detail.

If Case~(i) holds, we have
\begin{align}\label{3.1}
T(r,F)&\le N_2(r,F)+N_2(r,G)+N_2(r,\frac{1}{F})+N_2(r,\frac{1}{G})+\overline{N}(r,G)+\overline{N}(r,\frac{1}{G}) \notag\\
&\quad+2\big(\overline{N}(r,F)+\overline{N}(r,\frac{1}{F})\big )+S(r,f)+S(r,g)\notag\\
&\leq N_2\Big (r,\frac{1}{{F}}\Big)
+ (k-1)\overline{N}(r,G_{1}')+N_{k+1}\Big ( r,\frac{1}{G_{1}'}\Big)
+N_2 (r,{F})+N_2 (r,{G})\notag\\
&\quad+2\Big((k-1)\overline{N}(r,F'_1)+N_k\big(r,\frac{1}{F'_1}\big)\Big)+2\overline{N}(r,F)\notag\\
&\quad +(k-1)\overline{N}(r,G'_1)+N_k\big(r,\frac{1}{G'_1}\big)+\overline{N}(r,G)+S(r,f)+S(r,g).
\end{align}
where the second  inequality follows from 
 Lemma~\ref{lemma3.1}, that
\begin{align*}
N_2\Big (r,\frac{1}{G}\Big)&=N_2\Big (r,\frac{1}{(G_{1}')^{(k-1)}}\Big)\cr
&\leq (k-1)\overline{N}(r,G_{1}')+N_{k+1}\Big ( r,\frac{1}{G_{1}'}\Big) +S(r,g),\cr
\overline{N}(r,\frac{1}{F}\Big)&=N_1\Big (r,\frac{1}{(F_{1}')^{(k-1)}}\Big)\cr
&\leq (k-1)\overline{N}(r,F_{1}')+N_{k}\Big ( r,\frac{1}{F_{1}'}\Big) +S(r,f),
\end{align*}
and $$\overline{N}(r,\frac{1}{G}\Big)\leq (k-1)\overline{N}(r,G_{1}')+N_{k}\Big ( r,\frac{1}{G_{1}'}\Big) +S(r,g).$$

On the other hand, we can write
\begin{equation*}
 Q(z)-R(z)=a(z-\beta)Q'(z) 
\end{equation*}
where $a\ne 0$ and $\beta$ are constants, and $R(z)$ is a polynomial of
degree at most $q-2.$ Applying the Logarithmic Derivative Lemma, we have 
\begin{align*} m\Big(r,\frac{1}{Q(f)-R(f)}\Big)&=m\Big(r,\frac{(Q(f))'}{Q(f)-R(f)}\cdot\frac{1}{(Q(f))'}\Big)\\
&\leq m\Big(r,\dfrac{f'}{a(f-\beta)}\Big)+m\Big(r,\frac{1}{F_{1}'}\Big)+O(1)\\
&\leq m\Big(r,\frac{1}{F_{1}'}\Big)+S(r,f),
\end{align*}
 which gives
\begin{align*} T(r,F_{1}')&= m(r,\frac1{F_1'})+N(r,\frac1{F_1'})+O(1)\\
&\geq T\Big(r,\frac{1}{Q(f)-R(f)}\Big)-N\Big(r,\frac{1}{Q(f)-R(f)}\Big)+N\Big(r,\frac{1}{F_{1}'}\Big)+O(1)\\
&\geq qT(r,f)- N\Big(r,\frac{1}{Q'(f)}\Big)-N\Big(r,\frac{1}{f-\beta}\Big)+N\Big(r,\frac{1}{F_{1}'}\Big)+O(1).
\end{align*}
Therefore,  applying Lemma~\ref{lemma3.1} to the function $F_1'$ 
(with the notation $(F_{1}')^{(k-1)}=F),$ we have
\begin{align}\label{3.2}
 T(r,F)&\geq T(r,F_{1}')+N_2\Big (r, \frac{1}{F}\Big)-N_{k+1}\Big (r, \frac{1}{F_{1}'}\Big)+S(r,f)\notag\\
   &\ge qT(r,f)- N\Big(r,\frac{1}{Q'(f)}\Big)-N\Big(r,\frac{1}{f-\beta}\Big)+N\Big(r,\frac{1}{F_{1}'}\Big)\notag\\
&\quad+N_2\Big (r, \frac{1}{F}\Big)-N_{k+1}\Big (r, \frac{1}{F_{1}'}\Big)+S(r,f).
\end{align}
The inequalities  \eqref{3.1} and \eqref{3.2}   imply 
\begin{align*}
qT(r,f)
 &\leq (k-1)\overline{N}(r,G_{1}')+N_{k+1}\Big ( r,\frac{1}{G_{1}'}\Big)+N_2 (r,G)+N_2 (r,F)\notag\\
&\quad+2\Big((k-1)\overline{N}(r,F'_1)+N_k\big(r,\frac{1}{F'_1}\big)\Big)+2\overline{N}(r,F)+(k-1)\overline{N}(r,G'_1)\notag\\
&\quad +N_k\big(r,\frac{1}{G'_1}\big)+\overline{N}(r,G)+N\Big(r,\frac{1}{f-\beta}\Big)+ N\Big(r,\frac{1}{Q'(f)}\Big)\notag\\
&\quad-N\Big(r,\frac{1}{F_{1}'}\Big)+N_{k+1}\Big (r, \frac{1}{F_{1}'}\Big)+S(r,f)+S(r,g)\cr
 &\le (2k+1)\overline{N}(r,g)+2N\Big ( r,\frac{1}{g'}\Big) +(2k+1)\sum_{i=1}^{\upsilon}N\Big ( r,\frac{1}{g-\zeta_i}\Big)\cr
&\quad +2\sum_{i=\upsilon+1}^{l}m_i N\Big ( r,\frac{1}{g-\zeta_i}\Big)+ (2k+2)\overline{N}(r,f)+2N\big(r,\frac{1}{f'}\big)\cr
&\quad +(3k+1)\sum_{i=1}^{\upsilon}N\Big ( r,\frac{1}{f-\zeta_i}\Big)+3\sum_{i=\upsilon+1}^{l}m_i N\Big ( r,\frac{1}{f-\zeta_i}\Big)\cr
&\quad +N\Big(r,\frac{1}{f-\beta}\Big)+S(r,f)+S(r,g)\cr
&\le \Big(2k+5+\upsilon (2k+1)+2\sum_{i=\upsilon+1}^{l}m_i\Big)T(r,g)\cr 
&\quad +\Big(2k+7+\upsilon (3k+1)+3\sum_{i=\upsilon+1}^{l}m_i\Big)T(r,f)\cr
&\quad +S(r,f)+S(r,g).
\end{align*}
Therefore
\begin{align}\label{ctp1}
&\quad\Big(q-2k-7-\upsilon (3k+1)-3\sum_{i=\upsilon+1}^{l}m_i\Big)T(r,f)\cr
&\le \Big(2k+5+\upsilon (2k+1)+2\sum_{i=\upsilon+1}^{l}m_i\Big)T(r,g)+S(r,f)+S(r,g).
\end{align}
By similar arguments, we have
\begin{align}\label{ctp2}
&\quad\Big(q-2k-7-\upsilon (3k+1)-3\sum_{i=\upsilon+1}^{l}m_i\Big)T(r,g)\cr
&\le \Big(2k+5+\upsilon (2k+1)+2\sum_{i=\upsilon+1}^{l}m_i\Big)T(r,f)+S(r,f)+S(r,g).
\end{align}
Combining \eqref{ctp1} and \eqref{ctp2}, we get
\begin{equation*}
 \Big(q-4k-12-\upsilon(5k+2)-5\sum_{i=\upsilon+1}^{l}m_i\Big)(T(r,g)+T(r,f))\le S(r,f)+S(r,g).
\end{equation*}
Thus, when $q>4k+12+\upsilon(5k+2)+5\sum_{i=\upsilon+1}^{l}m_i$ we have a contradiction.
\end{proof}

\begin{proof}[Proof of Theorem~\ref{th2}] The Theorem~\ref{th2} follows from Theorem~\ref{th1} and Lemma~\ref{lemma3.4}.
\end{proof}

\subsection{Proof of Corollary~\ref{corollary1}} 
We take $Q(z)=z^nP(z)$, and $q:=\deg Q=n+m$. We can write
$$Q'(z)=bz^{n-1}\prod_{j=2}^l(z-\zeta_j)^{m_j}$$ with $b\in\C^*.$ We will check that the hypotheses in the corollary imply
the hypotheses in the theorems.

We have $n>4m+9k+14=3m+q-n+9k+14$, hence $n>\dfrac{q+1}{2},$ and 
\begin{align*}q=n+m&>9k+5m+14=9k+5\sum_{j=2}^{\upsilon} m_j+5\sum_{j=\upsilon+1}^{l} m_j+14\cr
&\geq 9k+14+5(\upsilon-1)(k+1)+5\sum_{j=\upsilon+1}^{l} m_j\cr 
&=4k+9+3\upsilon+\upsilon(5k+2)+5\sum_{j=\upsilon+1}^{l} m_j\cr
&\geq 4k+12+\upsilon(5k+2)+5\sum_{j=\upsilon+1}^{l} m_j
\end{align*}
which satisfies the condition in Theorem~\ref{th1}. Hence, we are done for Corollary~\ref{corollary1}.

\end{document}